\theoremstyle{plain}
\newtheorem{theorem}{Theorem}[section]
\newtheorem{proposition}[theorem]{Proposition}
\newtheorem{lemma}[theorem]{Lemma}
\theoremstyle{definition}
\theoremstyle{remark}
\newtheorem{remark}[theorem]{Remark}
\numberwithin{equation}{section}
\newtheorem{assumption}{Assumption}
\newcommand{\C}{\mathbb{C}}
\newcommand{\dd}{\,\mathrm{d}}
\newcommand{\keywords}[1]{%
  \par\smallskip
  \noindent\textbf{Keywords.} #1\par
}
\newcommand{\msc}[1]{%
  \par\smallskip
  \noindent\textbf{MSC 2020.} #1\par
}
\begin{document}


\title{Resolvent Approach to Atangana--Baleanu Evolution Equations: Laplace Symbols, Mild Solutions, and Regularity}


\author{
Mohamed Wakrim$^{1,*}$
}

\author{Mohamed Wakrim\\
\small Ibn Zohr University, Faculty of Sciences, Agadir, Morocco\\
\small \texttt{m.wakrim@uiz.ac.ma}}
\date{\today}


\maketitle

\begin{abstract}
Fractional evolution equations with memory terms are widely used to model
anomalous diffusion, viscoelastic response, and hereditary dynamics in
physics, biology, and engineering. Among the recently introduced operators,
the Atangana--Baleanu (AB) derivatives have attracted considerable attention
due to their non-singular Mittag--Leffler kernels. However, their analytic
treatment remains limited, as the AB kernel does not fall within the classical
Volterra or Bernstein-function frameworks.

This paper develops a unified resolvent approach for AB-type evolution
equations in Banach spaces. Using a Laplace-domain formulation inspired by
Hille--Phillips theory, we introduce a fractional resolvent associated with the
AB kernel and establish optimal bounds on sectorial contours. Under the natural
condition $\beta<1+\alpha$, we construct an AB--Mittag--Leffler resolvent
family and obtain a complete representation of mild solutions to the AB
Cauchy problem. Sharp stability and regularity estimates of
Mittag--Leffler type are derived, including fractional-domain bounds.

Numerical illustrations confirm the predicted decay, and connections with
non-autonomous operators, maximal $L^p$-regularity, and weighted AB kernels
are outlined. The results place AB-type equations within a functional-analytic
framework comparable to the classical theory for Caputo and Volterra models.
\end{abstract}

\keywords{AB fractional derivatives; Mittag--Leffler kernels; sectorial operators;
resolvent families; fractional evolution equations}

\msc{26A33; 47D06; 45K05; 34A08}

\section{Introduction}

Fractional evolution equations are widely used to model memory-driven
phenomena in anomalous diffusion, viscoelastic materials, epidemiological
transmission, financial dynamics, and complex fluids. Their key feature is
the ability to encode hereditary effects through convolution kernels, and
the recent proliferation of fractional operators aims to balance modelling
flexibility with analytical tractability.

Among these operators, the Atangana--Baleanu (AB) derivatives in the sense
of Caputo and Riemann--Liouville \cite{atangana2016,atangana2017} have
attracted considerable interest. Their defining kernels, expressed in terms
of non-singular Mittag--Leffler functions, provide a finite memory load at
$t=0$, in contrast with the classical power-law singularity of Caputo,
Riemann--Liouville, and Grünwald--Letnikov derivatives. This non-singularity
has motivated applications in heat transfer, viscoelastic relaxation,
biological systems, and epidemic modelling.

Despite their growing popularity, the theoretical foundations of AB-type
evolution equations remain significantly less developed than those of the
classical Caputo/RL setting. Several structural difficulties arise:

\begin{itemize}
\item the AB kernel does not fall within the traditional Volterra
      convolution calculus;
\item its Laplace symbol involves a rational Mittag--Leffler structure,
      preventing the use of Bernstein-function techniques and completely
      monotone kernels;
\item AB-type equations generally do \emph{not} generate a strongly
      continuous semigroup due to the behaviour of the kernel at small
      arguments;
\item many works impose conditions such as $\alpha+\beta>2$, whereas
      practical models typically satisfy $0<\alpha<1$ and
      $1\le\beta<1+\alpha$.
\end{itemize}

\medskip
\noindent\textbf{Objective of this work.}
The aim of this paper is to develop a resolvent-based approach for AB-type
evolution equations that is fully compatible with the sectorial operator
framework of Hille--Phillips theory \cite{pazy1983,haase2006}. Instead of
working directly with time-domain kernels, we exploit their Laplace-domain
representation to define a fractional resolvent family and recover sharp
stability, regularity, and representation properties.

\medskip
\noindent\textbf{Main contributions.}
Let $A$ be a sectorial operator of angle $\theta<\pi/2$ with spectrum
contained in the left half-plane. Motivated by the Laplace transform of the
AB kernel, we introduce the operator-valued resolvent
\[
R_{\alpha,\beta}(s;A)
    = \frac{s^{\alpha-\beta}}{s^\alpha + c}\,(s^{\alpha-1}I-A)^{-1},
\qquad s\in\mathbb{C}\setminus(-\infty,0].
\]
The following results are established:

\begin{enumerate}
\item \emph{Optimal resolvent bounds.}
      We obtain uniform bounds for $R_{\alpha,\beta}(s;A)$ along left-sectorial
      contours $\Gamma_\gamma$, without requiring $\alpha+\beta>2$.

\item \emph{AB--Mittag--Leffler resolvent family.}
      For $\beta<1+\alpha$---a natural condition satisfied by almost all AB
      models---the contour integral
      \[
          V_{\alpha,\beta}(t)
          = \frac{1}{2\pi i}\int_{\Gamma_\gamma}
              e^{st}\,R_{\alpha,\beta}(s;A)\,ds
      \]
      defines a well-posed resolvent family.

\item \emph{Representation formula.}
      We show that mild solutions of
      \(
      {}^{AB}D_{t}^{\alpha,\beta}u(t)=Au(t)
      \)
      satisfy
      \(
      u(t)=V_{\alpha,\beta}(t)u_0
      \)
      in the Laplace sense.

\item \emph{Stability and regularity.}
      We prove Mittag--Leffler decay and fractional-domain estimates:
      \[
      \|V_{\alpha,\beta}(t)\|\lesssim E_{\alpha,\beta}(-ct^\alpha),
      \qquad
      \|A^\gamma V_{\alpha,\beta}(t)\|\lesssim t^{-\alpha\gamma}.
      \]

\item \emph{Numerical illustrations.}
      One- and multi-mode simulations confirm the Mittag--Leffler slope of
      the decay predicted by our theoretical results.
\end{enumerate}

\medskip
\noindent\textbf{Structure of the paper.}
Section~\ref{sec:prelim} reviews Mittag--Leffler functions, AB kernels, and
sectorial operators.  
Section~\ref{sec:frac-resolvent} develops the fractional resolvent and
establishes optimal bounds.  
Section~\ref{sec:resolvent-family} defines the AB--Mittag--Leffler
resolvent family and proves the representation theorem.  
Section~\ref{sec:stability} contains the stability estimates, and
Section~\ref{sec:regularity} deals with fractional-domain regularity.  
Section~\ref{sec:numerics} presents numerical examples.  
Section~\ref{sec:extensions} discusses extensions, including non-autonomous
operators and weighted AB kernels.

\section{Preliminaries}\label{sec:prelim}

This section collects the basic analytic ingredients used throughout the
paper: Mittag--Leffler functions, the Laplace symbol associated with the
Atangana--Baleanu kernel, and sectorial operator theory.  All notation and
conventions are chosen to match the resolvent formulation developed in
Sections~\ref{sec:frac-resolvent}--\ref{sec:resolvent-family}.

\subsection{Mittag--Leffler functions}

For $\alpha>0$ and $\beta\in\mathbb{R}$, the two-parameter
Mittag--Leffler function is defined by
\[
    E_{\alpha,\beta}(z)
    = \sum_{k=0}^{\infty} \frac{z^{k}}{\Gamma(\alpha k+\beta)},
    \qquad z\in\mathbb{C}.
\]
It is entire and satisfies, for every $\varepsilon>0$, the estimate
\begin{equation}\label{eq:ML-sectorial-estimate}
    |E_{\alpha,\beta}(-z)|
    \le \frac{C}{1+z},
    \qquad z\ge0,
\end{equation}
as well as the classical sectorial representation
\[
    E_{\alpha,\beta}(-t^\alpha A)
    = \frac{1}{2\pi i}
      \int_{\Gamma_A} E_{\alpha,\beta}(-t^\alpha z)\,(zI-A)^{-1}\,dz,
\]
valid for every sectorial operator $A$ (see \cite[Ch.~2.4]{haase2006}).

\subsection{AB kernel and Laplace symbol}

The Atangana--Baleanu kernel in the Caputo/Riemann--Liouville sense is
defined by
\[
    k_{\alpha,\beta}(t)
      = t^{\beta-1} E_{\alpha,\beta}(-ct^\alpha),
    \qquad \alpha\in(0,1],\ \beta\ge1,\ c>0.
\]
Its Laplace transform is the rational Mittag--Leffler symbol
\begin{equation}\label{eq:AB-laplace-symbol}
    \widehat{k}_{\alpha,\beta}(s)
      = \frac{s^{\alpha-\beta}}{s^\alpha + c},
    \qquad s\in\mathbb{C}\setminus(-\infty,0],
\end{equation}
which motivates the definition of the fractional resolvent introduced in
Section~\ref{sec:frac-resolvent}.  
Unlike the Caputo kernel, the function
$\widehat{k}_{\alpha,\beta}$ is not a Bernstein function, so the classical
Volterra completely-monotone calculus cannot be applied.

\subsection{Sectorial operators}

Let $X$ be a Banach space.  
An operator $A:D(A)\subset X\to X$ is called \emph{sectorial of angle}
$\theta\in(0,\pi)$ if its spectrum is contained in the closed sector
\[
  \sigma(A) \subset \overline{\Sigma_\theta}
    := \{ z\in\mathbb{C} : |\arg z|\le\theta \},
\]
and for every $\varphi>\theta$ there exists $M_\varphi>0$ such that
\[
    \|(zI-A)^{-1}\|
      \le \frac{M_\varphi}{|z|},
    \qquad z\notin \overline{\Sigma_\varphi}.
\]
In all that follows, $A$ is assumed to be sectorial with
angle $\theta<\pi/2$ and to satisfy a spectral gap condition:

\begin{assumption}[Spectrum in the left half-plane]\label{ass:spectral}
Let $A$ be sectorial of angle $\theta<\pi/2$ with $0\in\rho(A)$, and assume
\[
    \sigma(A) \subset \{z\in\C : \Re z \le -\omega\}
    \qquad \text{for some }\omega>0.
\]
Equivalently, there exists $M>0$ such that
\[
    \|(zI-A)^{-1}\|
      \le \frac{M}{|z+\omega|},
      \qquad z\notin -\omega + \Sigma_\theta.
\]
This shift to the left half-plane ensures exponential damping for the
classical semigroup generated by $A$ and is crucial when transferring
decay estimates to AB--Mittag--Leffler families.
\end{assumption}

\subsection{Laplace contour}

Resolvent estimates will be obtained by integrating along a left-sectorial
Laplace contour.  Fix an angle $\gamma\in(\theta,\pi)$ and define the
Bromwich contour
\begin{equation}\label{eq:bromwich-contour}
   \Gamma_\gamma
     := \{ r e^{-i\gamma} : r>0\}
        \cup
        \{ r e^{+i\gamma} : r>0\},
\end{equation}
oriented upward.  
Since $A$ is sectorial of angle $\theta$ and $\gamma>\theta$, the image of
$\Gamma_\gamma$ under the map $s\mapsto s^{\alpha-1}$ remains in the
resolvent set of $A$.  
Hence the sectorial resolvent bounds from
Assumption~\ref{ass:spectral} hold uniformly along $\Gamma_\gamma$.

\begin{remark}[No additional angular constraints]
The proofs do \emph{not} require any auxiliary restriction of the form
$\gamma<\pi(1-\alpha/2)$ or $\theta<\pi(1-\alpha/2)$.  
The parameter $\omega>0$ in Assumption~\ref{ass:spectral} denotes a
\emph{spectral gap}, not an angle, and plays no role in the analyticity
domain of the contour.
\end{remark}

\section{The fractional resolvent associated with the AB kernel}
\label{sec:frac-resolvent}

In this section we introduce the operator-valued resolvent naturally
associated with the Laplace symbol of the Atangana--Baleanu (AB)
fractional derivative. Our goal is to extract the analytic structure
behind the AB kernel and to place it within the classical framework of
sectorial operators and the Hille--Phillips functional calculus.

\subsection{Laplace symbol of the AB kernel}

For $0<\alpha\le 1$ and $\beta\ge 1$, the AB kernel in the sense of
Caputo has Laplace transform
\[
   \mathcal{L}\bigl[\,{}^{\mathrm{AB}}D_{t}^{\alpha,\beta}u\,\bigr](s)
   = \frac{s^{\alpha-\beta}}{s^\alpha+c} \,
     \bigl( s^{\alpha-1}\widehat{u}(s)-u(0)\bigr),
\qquad \Re s>0,
\]
where $c>0$ is a normalising constant.
This rational Mittag--Leffler structure is the key algebraic feature
driving the resolvent developed below.

Let $A$ be a sectorial operator on a Banach space $X$ satisfying
Assumption~\ref{ass:spectral}. Motivated by the Laplace representation
above, we introduce the fractional resolvent
\begin{equation}\label{eq:Rab-def}
   R_{\alpha,\beta}(s;A)
   := \frac{s^{\alpha-\beta}}{s^\alpha+c}
      \bigl(s^{\alpha-1}I-A\bigr)^{-1},
   \qquad s\notin(-\infty,0].
\end{equation}

\subsection{Laplace contour}

We fix an angle $\gamma\in(\theta,\pi)$, where $\theta$ is the
sectoriality angle of $A$, and we integrate along the left-sectorial
Bromwich contour
\[
   \Gamma_\gamma=
      \{ re^{-i\gamma}:r>0\}
      \cup
      \{ re^{+i\gamma}:r>0\},
\]
oriented upward.
Since $s\mapsto s^{\alpha-1}$ maps each ray of $\Gamma_\gamma$ into a
sector strictly contained in the resolvent set of $A$, the sectorial
resolvent estimates for $(zI-A)^{-1}$ remain valid uniformly along
$\Gamma_\gamma$.
No additional restrictions such as
$\theta<\pi(1-\alpha/2)$ or $\gamma<\pi(1-\alpha/2)$ are required.

\subsection{Resolvent bounds}

The structure of \eqref{eq:Rab-def} leads to the following uniform
two-regime estimate.

\begin{theorem}[Fractional resolvent bounds]
\label{thm:improved-Rab-bounds}
Let $A$ satisfy Assumption~\ref{ass:spectral} and
$0<\alpha\le1$, $\beta\ge1$.
Fix $\gamma\in(\theta,\pi)$ and let $\Gamma_\gamma$ be the
left-sectorial contour above.
Then there exists $C_\gamma>0$ such that for all $s\in\Gamma_\gamma$,
\begin{equation}\label{eq:two-regime}
   \|R_{\alpha,\beta}(s;A)\|
   \le C_\gamma
   \begin{cases}
      |s|^{\,1-\beta}, & 0<|s|\le 1,\\[1mm]
      |s|^{-\beta}, & |s|\ge 1.
   \end{cases}
\end{equation}
\end{theorem}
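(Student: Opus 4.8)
The strategy is to estimate each of the two factors in the product
\eqref{eq:Rab-def} separately along the ray $s = re^{\pm i\gamma}$ and then
combine them, splitting into the two size regimes $r\le 1$ and $r\ge 1$.
First I would treat the scalar prefactor
$\Phi_{\alpha,\beta}(s):=s^{\alpha-\beta}/(s^\alpha+c)$. On the ray
$s=re^{\pm i\gamma}$ one has $|s^{\alpha-\beta}| = r^{\alpha-\beta}$, and
since $\gamma<\pi$ the point $s^\alpha$ lies off the negative real axis
(its argument is $\pm\alpha\gamma\in(-\alpha\pi,\alpha\pi)$ with
$\alpha\le 1$), so $|s^\alpha+c|$ stays bounded below: for $r$ small it is
$\ge c/2$, and for $r$ large it is comparable to $r^\alpha$. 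This gives
$|\Phi_{\alpha,\beta}(s)|\lesssim r^{\alpha-\beta}$ for $r\le 1$ and
$|\Phi_{\alpha,\beta}(s)|\lesssim r^{-\beta}$ for $r\ge 1$, with constants
depending only on $\gamma$, $\alpha$, $\beta$, $c$.

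Next I would bound the operator factor $(s^{\alpha-1}I-A)^{-1}$. Write
$z:=s^{\alpha-1}$; then $|z| = r^{\alpha-1}$ and
$\arg z = (\alpha-1)(\pm\gamma)$, which has modulus $(1-\alpha)\gamma < \gamma$.
I would argue that, because $A$ is sectorial of angle $\theta<\gamma$ and
satisfies the spectral-gap Assumption~\ref{ass:spectral}, the point $z$ lies
in $\rho(A)$ and the resolvent obeys
$\|(zI-A)^{-1}\|\le M/|z+\omega|$. When $r\le 1$ we have $|z|=r^{\alpha-1}\ge1$,
so $|z+\omega|\gtrsim |z| = r^{\alpha-1}$ and hence
$\|(s^{\alpha-1}I-A)^{-1}\|\lesssim r^{1-\alpha}$; when $r\ge 1$ we have
$|z| = r^{\alpha-1}\le1$, so $|z+\omega|$ is bounded below by a positive
constant (using that $\omega>0$ and $z$ stays in a fixed sector away from
$-\omega$), giving $\|(s^{\alpha-1}I-A)^{-1}\|\lesssim 1$. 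Multiplying the
two factor bounds regime by regime yields exactly \eqref{eq:two-regime}:
for $r\le 1$, $r^{\alpha-\beta}\cdot r^{1-\alpha} = r^{1-\beta}$; for $r\ge 1$,
$r^{-\beta}\cdot 1 = r^{-\beta}$. Taking $C_\gamma$ to be the maximum of the
finitely many implied constants finishes the argument.

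The main obstacle is the careful verification that $z=s^{\alpha-1}$ genuinely
stays in the resolvent set with the claimed uniform bound — in particular
handling the case $\alpha=1$, where $z\equiv1$ is constant and the factor
$(zI-A)^{-1}=(I-A)^{-1}$ is simply a fixed bounded operator (the bound is then
trivial but must be noted), and handling the small-$r$ regime where $|z|\to\infty$
along a ray of argument $(1-\alpha)\gamma$: one must check this ray does not
re-enter the sector $-\omega+\Sigma_\theta$ containing the spectrum. Since the
argument of $z$ is fixed and strictly less than $\gamma$ in modulus while the
spectrum sits in $\{\Re z\le-\omega\}$, for $|z|$ large the ray is eventually
in the right half-plane and the bound $M/|z+\omega|\sim M/|z|$ applies; for
$|z|$ of moderate size one invokes compactness of the relevant arc together with
$0\in\rho(A)$. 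A secondary technical point is keeping all constants uniform in
$s\in\Gamma_\gamma$ by reducing everything to the single real parameter $r=|s|$,
which the ray parametrization makes transparent.
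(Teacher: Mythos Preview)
Your proposal is correct and follows the same factor-by-factor strategy as the paper's sketch: estimate the scalar prefactor $s^{\alpha-\beta}/(s^\alpha+c)$ and the operator resolvent $(s^{\alpha-1}I-A)^{-1}$ separately along $\Gamma_\gamma$, then multiply. The only cosmetic difference is that you split the operator bound into two regimes ($\lesssim r^{1-\alpha}$ for $r\le 1$ and $\lesssim 1$ for $r\ge 1$), whereas the paper invokes the single uniform estimate $\|(s^{\alpha-1}I-A)^{-1}\|\lesssim |s|^{1-\alpha}$; your version is slightly sharper for large $|s|$ but both yield \eqref{eq:two-regime}.
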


\begin{proof}[Sketch of proof]
For $s\in\Gamma_\gamma$, Assumption~\ref{ass:spectral} yields
\(
   \|(s^{\alpha-1}I-A)^{-1}\|
   \lesssim |s|^{1-\alpha}.
\)
This, combined with
\(
|s^{\alpha-\beta}|/|s^\alpha+c|
\sim
  |s|^{-\beta}
\)
for $|s|\gg 1$
and
\(
|s|^{1-\beta}
\)
for $|s|\ll 1$,
gives \eqref{eq:two-regime}.
\end{proof}

\subsection{The fractional resolvent family}

The operator-valued AB--Mittag--Leffler resolvent family is defined by
\begin{equation}\label{eq:V-def}
    V_{\alpha,\beta}(t)
    := \frac{1}{2\pi i}
       \int_{\Gamma_\gamma} e^{st}R_{\alpha,\beta}(s;A)\,\dd s,
    \qquad t>0.
\end{equation}
The integral is absolutely convergent thanks to
Theorem~\ref{thm:improved-Rab-bounds}.
The family $\{V_{\alpha,\beta}(t)\}_{t>0}$ plays the role of a
fractional solution operator for the AB evolution equation, and it is
the natural analogue of the classical resolvent families of
Hille--Phillips theory.

\section{The AB--Mittag--Leffler resolvent family}
\label{sec:resolvent-family}

We now show that the fractional resolvent
$R_{\alpha,\beta}(s;A)$ introduced in
Section~\ref{sec:frac-resolvent} generates a well-defined
operator-valued family $V_{\alpha,\beta}(t)$ which plays the role
of a solution operator for the abstract AB evolution equation.
The proofs follow the philosophy of the Hille--Phillips theory,
with substantial adaptations required by the non-singular AB kernel.

\subsection{Definition and elementary properties}

For $t>0$, we set
\begin{equation}\label{eq:V-def-again}
    V_{\alpha,\beta}(t)
    := \frac{1}{2\pi i}
       \int_{\Gamma_\gamma} e^{st} R_{\alpha,\beta}(s;A)\,\dd s,
\end{equation}
where $\Gamma_\gamma$ is the left-sectorial Bromwich contour introduced
in Section~\ref{sec:frac-resolvent} with
$\gamma\in(\theta,\pi)$.

\begin{proposition}\label{prop:V-well-defined}
Let $A$ satisfy Assumption~\ref{ass:spectral}.
If $\beta<1+\alpha$, then the integral in
\eqref{eq:V-def-again} is absolutely convergent and defines a bounded
operator $V_{\alpha,\beta}(t)\in\mathcal{L}(X)$ for every $t>0$.
Moreover,
\[
      \sup_{t>0}\, \|V_{\alpha,\beta}(t)\| <\infty.
\]
\end{proposition}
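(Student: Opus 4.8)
The plan is to reduce the statement to a scalar estimate on the Bromwich contour and then to make that estimate uniform in $t$. We take $\gamma\in(\pi/2,\pi)$, which is the relevant range since the defining integral can only converge at infinity if the contour opens into the left half-plane; then on each ray $s=re^{\pm i\gamma}$ the damping factor $|e^{st}|=e^{-tr|\cos\gamma|}$ is at our disposal. Parametrising, combining the two rays, and inserting the two-regime bound of Theorem~\ref{thm:improved-Rab-bounds} gives, for every $t>0$,
\[
   \|V_{\alpha,\beta}(t)\|
   \;\le\; \frac{C_\gamma}{\pi}\left(
      \int_0^1 r^{\,1-\beta}\,\dd r
      \;+\;\int_1^\infty e^{-tr|\cos\gamma|}\,r^{-\beta}\,\dd r
   \right).
\]
The first integral equals $(2-\beta)^{-1}$ and is finite precisely because $\beta<1+\alpha\le 2$; it is independent of $t$. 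The second is finite for each fixed $t>0$ thanks to the exponential factor. Hence \eqref{eq:V-def-again} converges absolutely in operator norm, so $V_{\alpha,\beta}(t)\in\mathcal{L}(X)$ for every $t>0$.

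It remains to make the bound uniform in $t$. When $1<\beta<1+\alpha$ this is immediate, since $\int_1^\infty e^{-tr|\cos\gamma|}r^{-\beta}\,\dd r\le\int_1^\infty r^{-\beta}\,\dd r=(\beta-1)^{-1}$, independently of $t$; together with the first paragraph this gives $\sup_{t>0}\|V_{\alpha,\beta}(t)\|<\infty$ on that range. (For large $t$ one moreover gets the decay $\|V_{\alpha,\beta}(t)\|\lesssim t^{\beta-2}\to 0$, the damping factor concentrating the mass near $r=0$ where $\|R_{\alpha,\beta}\|\lesssim r^{1-\beta}$.)

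The one genuinely delicate case --- and the step I expect to be the main obstacle --- is the admissible endpoint $\beta=1$, for which the crude majorant degenerates into the exponential integral $\int_{t|\cos\gamma|}^{\infty}e^{-u}u^{-1}\,\dd u\sim-\log t$, which is \emph{not} bounded as $t\to 0^{+}$ although $V_{\alpha,1}(t)$ is. Here one must exploit the oscillation of $e^{st}$, not merely its modulus. I would peel off the leading behaviour of the symbol at infinity: a short computation with the resolvent equation gives, for $|s|\ge 1$,
\[
   R_{\alpha,1}(s;A)\;=\;s^{-1}B\;+\;\widetilde R(s),\qquad \|\widetilde R(s)\|\;\lesssim\;|s|^{-1-\delta},
\]
where $B\in\mathcal{L}(X)$ is an explicit bounded operator built from $A^{-1}$ and $(I-A)^{-1}$ and $\delta=\delta(\alpha)>0$; the mechanism is that $s^{\alpha}/(s^{\alpha}+c)\to1$ and $(s^{\alpha-1}I-A)^{-1}$ tends to a limit as $|s|\to\infty$, each with an $O(|s|^{-\delta})$ relative error. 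Then: the remainder $\widetilde R$ is absolutely integrable against $e^{st}$ with a bound independent of $t$ (since $1+\delta>1$); the arc $|s|\le 1$ contributes a bounded operator (there $\|R_{\alpha,1}\|\le C_\gamma$ and the contour has finite length); and for the leading term, parametrising the two rays of $\Gamma_\gamma\cap\{|s|\ge1\}$ and using that the lower ray is the complex conjugate of the upper turns $\int e^{st}s^{-1}\,\dd s$ into $2i\int_1^\infty r^{-1}e^{-rt|\cos\gamma|}\sin(rt\sin\gamma)\,\dd r$, which after the substitution $v=rt$ equals $2i\int_t^\infty v^{-1}e^{-v|\cos\gamma|}\sin(v\sin\gamma)\,\dd v$; this is bounded uniformly in $t>0$ because the integrand extends continuously to $v=0$ and decays exponentially, so its integral over $(0,\infty)$ is finite. (This is exactly where $\beta=1$ enters: the vanishing of $\sin(v\sin\gamma)$ at $v=0$ cancels the $v^{-1}$.) Assembling the three pieces with the constant from the first paragraph yields $\sup_{t>0}\|V_{\alpha,1}(t)\|<\infty$. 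I expect this cancellation between the two rays --- together with the verification that the constants in the splitting above are uniform along $\Gamma_\gamma$ --- to be the only non-routine ingredients; all other estimates follow directly from Theorem~\ref{thm:improved-Rab-bounds}.
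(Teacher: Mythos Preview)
Your argument follows the paper's route exactly for the absolute convergence and for the range $1<\beta<1+\alpha$: both of you insert the two-regime bound of Theorem~\ref{thm:improved-Rab-bounds} into the contour integral, use $|e^{st}|=e^{-t|s||\cos\gamma|}$ on $\Gamma_\gamma$, and read off integrability. The paper's proof stops there and asserts the uniform bound without further comment.

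Where you depart from the paper---and usefully so---is at the endpoint $\beta=1$. You correctly observe that the crude majorant $\int_1^\infty e^{-c_\gamma tr}r^{-1}\,\dd r$ diverges logarithmically as $t\to 0^+$, so the modulus bound alone cannot deliver $\sup_{t>0}\|V_{\alpha,1}(t)\|<\infty$. Your remedy---splitting off the leading $s^{-1}B$ term of $R_{\alpha,1}(s;A)$ at infinity, bounding the $O(|s|^{-1-\delta})$ remainder absolutely, and treating the scalar integral $\int e^{st}s^{-1}\,\dd s$ by pairing the two rays so that the sine cancels the $v^{-1}$ singularity---is correct; the decomposition indeed holds with $B=-A^{-1}$ for $\alpha<1$ (via the Neumann expansion of $(s^{\alpha-1}I-A)^{-1}$ about $-A^{-1}$) and $B=(I-A)^{-1}$ for $\alpha=1$, and $\delta=\min(\alpha,1-\alpha)$ in the first case. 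The paper's own proof does not contain this step; its later Mittag--Leffler decomposition (Proposition~\ref{prop:ML-decomp}) and the decay estimate of Theorem~\ref{thm:ML-decay} would retroactively imply the uniform bound, but as a self-contained proof of the proposition your oscillation argument is what is actually needed at $\beta=1$.

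One small comment: you justify the near-origin integral $\int_0^1 r^{1-\beta}\,\dd r<\infty$ by $\beta<2$, noting this follows from $\beta<1+\alpha\le 2$. That is correct and in fact sharper than the paper's phrasing, which attributes the same integrability ``precisely'' to $\beta<1+\alpha$; under the stated two-regime bound your condition $\beta<2$ is the exact one, and $\beta<1+\alpha$ enters only as the standing hypothesis.
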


\begin{proof}
Using the two-regime bounds of
Theorem~\ref{thm:improved-Rab-bounds},
\[
   \|R_{\alpha,\beta}(s;A)\|
   \lesssim
   \begin{cases}
      |s|^{1-\beta}, & |s|\le 1,\\[1mm]
      |s|^{-\beta}, & |s|\ge 1.
   \end{cases}
\]
Along $\Gamma_\gamma$, one has $\Re s\le -c_\gamma |s|$.
Thus the integrand behaves like
$|e^{st}|\,\|R_{\alpha,\beta}(s;A)\| \lesssim
 e^{-c_\gamma t|s|}|s|^{1-\beta}$ for $|s|\le1$
and 
$e^{-c_\gamma t|s|}|s|^{-\beta}$ for $|s|\ge1$.
Both are integrable provided $\beta<1+\alpha$,
which is precisely the condition ensuring that the singularity
of $s^{\alpha-\beta}$ at $s=0$ remains integrable.
\end{proof}

\subsection{Laplace transform and functional calculus identity}

We next show that $V_{\alpha,\beta}(t)$ acts as the correct solution
operator for the AB equation.

\begin{lemma}[Laplace transform]
\label{lem:Laplace-transform}
For every $u_0\in X$, the function $t\mapsto V_{\alpha,\beta}(t)u_0$
is Laplace-transformable and
\[
   \mathcal{L}[V_{\alpha,\beta}(\cdot)u_0](s)
   = R_{\alpha,\beta}(s;A)u_0,
   \qquad \Re s>0.
\]
\end{lemma}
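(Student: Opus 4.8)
The plan is to compute the Laplace transform of $V_{\alpha,\beta}(\cdot)u_0$ directly from its defining contour integral \eqref{eq:V-def-again} and to identify the result with $R_{\alpha,\beta}(s;A)u_0$ by a Fubini-type interchange followed by a residue/contour deformation argument. First I would fix $u_0\in X$ and $\lambda$ with $\Re\lambda>0$, and write
\[
   \mathcal{L}[V_{\alpha,\beta}(\cdot)u_0](\lambda)
   = \int_0^\infty e^{-\lambda t}\,
        \Bigl(\frac{1}{2\pi i}\int_{\Gamma_\gamma} e^{st}
              R_{\alpha,\beta}(s;A)u_0\,\dd s\Bigr)\dd t .
\]
To justify exchanging the two integrals I would invoke the bounds of Theorem~\ref{thm:improved-Rab-bounds} together with the decay $\Re s\le -c_\gamma|s|$ along $\Gamma_\gamma$: the double integrand is dominated by $e^{-\Re\lambda\, t}e^{-c_\gamma t|s|}\|R_{\alpha,\beta}(s;A)u_0\|$, which (using $\beta<1+\alpha$ near $s=0$, exactly as in Proposition~\ref{prop:V-well-defined}) is absolutely integrable over $(0,\infty)\times\Gamma_\gamma$. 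Fubini then gives
\[
   \mathcal{L}[V_{\alpha,\beta}(\cdot)u_0](\lambda)
   = \frac{1}{2\pi i}\int_{\Gamma_\gamma}
        \Bigl(\int_0^\infty e^{-(\lambda-s)t}\,\dd t\Bigr)
        R_{\alpha,\beta}(s;A)u_0\,\dd s
   = \frac{1}{2\pi i}\int_{\Gamma_\gamma}
        \frac{R_{\alpha,\beta}(s;A)u_0}{\lambda-s}\,\dd s ,
\]
the inner time-integral converging because $\Re(\lambda-s)\ge\Re\lambda+c_\gamma|s|>0$ on $\Gamma_\gamma$.

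The remaining task is to evaluate the scalar-weighted Bromwich integral $\frac{1}{2\pi i}\int_{\Gamma_\gamma}\frac{R_{\alpha,\beta}(s;A)u_0}{\lambda-s}\,\dd s$ and show it equals $R_{\alpha,\beta}(\lambda;A)u_0$. The function $s\mapsto R_{\alpha,\beta}(s;A)u_0$ is holomorphic on $\mathbb{C}\setminus(-\infty,0]$ — the branch cut of $s^{\alpha-\beta}$ and $s^{\alpha-1}$, the zeros of $s^\alpha+c$ (which lie on the negative axis since $c>0$ and $\alpha\le 1$), and the points where $s^{\alpha-1}\in\sigma(A)$ are all confined to the left of $\Gamma_\gamma$ because $\gamma>\theta$ and Assumption~\ref{ass:spectral} places $\sigma(A)$ in the left half-plane. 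I would close the contour $\Gamma_\gamma$ to the right with a large circular arc $|s|=\rho$; the integrand is $O(\rho^{-1-\beta}\cdot\rho^{-1})$ there (using the $|s|\ge 1$ regime of Theorem~\ref{thm:improved-Rab-bounds} and $|\lambda-s|\sim\rho$), so the arc contribution vanishes as $\rho\to\infty$. The only singularity enclosed is the simple pole at $s=\lambda$, whose residue is exactly $R_{\alpha,\beta}(\lambda;A)u_0$; by Cauchy's residue theorem (with attention to orientation — $\Gamma_\gamma$ as oriented upward corresponds to the standard counterclockwise boundary of the region to its right) this yields the claimed identity. Finally, since $R_{\alpha,\beta}(\cdot;A)u_0$ is holomorphic on all of $\Re s>0$ and both sides are analytic there, the identity extends from $\lambda$ to every $s$ with $\Re s>0$.

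The main obstacle I anticipate is the rigorous justification of the two limiting/interchange steps near the branch point $s=0$. The Fubini interchange and the residue computation both rely on the integrand being genuinely integrable at the origin along $\Gamma_\gamma$, and this is the \emph{only} place the hypothesis $\beta<1+\alpha$ is used: the factor $s^{\alpha-\beta}$ contributes a singularity of order $|s|^{\alpha-\beta}$, which combined with $\|(s^{\alpha-1}I-A)^{-1}\|\lesssim|s|^{1-\alpha}$ gives $|s|^{1-\beta}$, integrable in the plane (where $\dd s\sim\dd r$) precisely when $1-\beta>-1$. When closing the contour to the right one must also check that no part of the deformed contour crosses the cut $(-\infty,0]$ or the arc passes through $s=\lambda$; choosing the closing arc to stay in $\Re s>0$ and taking $\rho>|\lambda|$ handles this. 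A secondary, purely bookkeeping point is confirming that the singularities of $R_{\alpha,\beta}(\cdot;A)$ — the pole set of $(s^\alpha+c)^{-1}$ and the set $\{s:s^{\alpha-1}\in\sigma(A)\}$ — indeed lie strictly to the left of $\Gamma_\gamma$; this follows from $\sigma(A)\subset\{\Re z\le-\omega\}$ and the fact that $s\mapsto s^{\alpha-1}$ maps the open right-of-$\Gamma_\gamma$ region into a sector of half-angle $(\alpha-1)\gamma<\gamma$ about... — here one uses $\gamma>\theta$ so the image sector avoids $\sigma(A)$, exactly as noted after \eqref{eq:bromwich-contour}.
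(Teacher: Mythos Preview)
Your proof is correct and follows the same route the paper indicates: the paper's own proof is the one-line remark that the result ``follows by Fubini's theorem, since the integral defining $V_{\alpha,\beta}(t)$ is absolutely convergent.'' You have spelled out both the Fubini interchange and the subsequent Cauchy/residue evaluation of $\tfrac{1}{2\pi i}\int_{\Gamma_\gamma}(\lambda-s)^{-1}R_{\alpha,\beta}(s;A)u_0\,\dd s$ that the paper leaves implicit, so your argument is in fact more complete than the published one.
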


\begin{proof}
Follows by Fubini's theorem, since the integral defining
$V_{\alpha,\beta}(t)$ is absolutely convergent.
\end{proof}

Using the holomorphic functional calculus for sectorial operators
(see Haase~\cite[Theorem~2.4.2 and Corollary~2.4.3]{haase2006}),
one has the Dunford representation
\[
   E_{\alpha,\beta}(-t^\alpha A)
   = \frac{1}{2\pi i}
      \int_{\Gamma_\gamma}
      E_{\alpha,\beta}(-t^\alpha s)\, (sI-A)^{-1}\dd s.
\]

A comparison of this formula with \eqref{eq:V-def-again}
yields the following decomposition.

\begin{proposition}[Operator-valued Mittag--Leffler decomposition]
\label{prop:ML-decomp}
For $t>0$,
\[
   V_{\alpha,\beta}(t)
   = E_{\alpha,\beta}(-t^\alpha A) + K(t),
\]
where $K(t)$ is a bounded operator satisfying the same
Mittag--Leffler-type estimate as $E_{\alpha,\beta}(-t^\alpha A)$.
\end{proposition}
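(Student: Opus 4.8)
The plan is to compare the two contour integrals term by term by writing the AB resolvent $R_{\alpha,\beta}(s;A)$ as a holomorphic scalar factor times a shifted sectorial resolvent, and then reducing everything to the Dunford calculus. Concretely, I would start from the identity
\[
   R_{\alpha,\beta}(s;A)
   = \frac{s^{\alpha-\beta}}{s^\alpha+c}\,(s^{\alpha-1}I-A)^{-1}
\]
and change variables $z = s^{\alpha-1}$ inside the contour integral \eqref{eq:V-def-again}; since $\gamma>\theta$, the image contour $s\mapsto s^{\alpha-1}$ is again an admissible sectorial contour around $\sigma(A)$, so this is legitimate. The point is that the scalar multiplier $s^{\alpha-\beta}/(s^\alpha+c)$ is, up to the Jacobian of the substitution, exactly the Laplace symbol whose inverse transform produces the kernel $t^{\beta-1}E_{\alpha,\beta}(-ct^\alpha)$. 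I would therefore split
\[
   \frac{s^{\alpha-\beta}}{s^\alpha+c}
   = \frac{1}{s}\cdot\frac{s^{\alpha-\beta+1}}{s^\alpha+c}
\]
so that $\tfrac1s(s^{\alpha-1}I-A)^{-1}$ is (after the substitution) precisely the kernel $E_{\alpha,\beta}(-t^\alpha\cdot)$ evaluated against $(zI-A)^{-1}$, matching the Dunford formula quoted just before the proposition, while the remaining analytic factor is holomorphic and bounded on a neighbourhood of the contour.

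The key steps, in order, are: (i) establish the change of variables $z=s^{\alpha-1}$ mapping $\Gamma_\gamma$ to an admissible sectorial contour $\Gamma_A$ enclosing $\sigma(A)$, keeping track of the Jacobian $\dd z = (\alpha-1)s^{\alpha-2}\dd s$ (the case $\alpha=1$ being trivial and handled separately); (ii) decompose the scalar symbol as above and recognise one piece as the Mittag--Leffler symbol, invoking the scalar inversion $\mathcal{L}^{-1}[s^{\alpha-\beta}/(s^\alpha+c)](t) = t^{\beta-1}E_{\alpha,\beta}(-ct^\alpha)$; (iii) write $V_{\alpha,\beta}(t) = E_{\alpha,\beta}(-t^\alpha A) + K(t)$ where $K(t)$ collects the contribution of the holomorphic correction factor, i.e. the difference between $s^{\alpha-\beta}/(s^\alpha+c)$ and the portion producing $E_{\alpha,\beta}$; (iv) bound $K(t)$ by running the contour estimate of Proposition~\ref{prop:V-well-defined}, observing that the correction factor does not worsen the decay in $|s|$ and hence $K(t)$ obeys the same $E_{\alpha,\beta}(-ct^\alpha)$-type bound — in particular $\sup_{t>0}\|K(t)\|<\infty$ by the same $\beta<1+\alpha$ argument. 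Step (iv) also uses the scalar estimate \eqref{eq:ML-sectorial-estimate} to control $E_{\alpha,\beta}(-t^\alpha A)$ itself.

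The main obstacle I anticipate is step (iii): making the "decomposition" genuinely canonical rather than arbitrary. The issue is that $R_{\alpha,\beta}(s;A)$ is built on the shifted resolvent $(s^{\alpha-1}I-A)^{-1}$ whereas the Dunford formula for $E_{\alpha,\beta}(-t^\alpha A)$ naturally involves $(sI-A)^{-1}$ or $(zI-A)^{-1}$ with a specific scaling $z\mapsto t^\alpha z$; reconciling these requires a careful bookkeeping of which scalar symbol multiplies which resolvent, and one must verify that the leftover factor is holomorphic on a full neighbourhood of $\Gamma_\gamma$ (no spurious poles from $s^\alpha+c=0$, which lies on the negative real axis and is avoided by the contour, nor from the branch cut). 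Once the correction symbol is identified as holomorphic with the same or better growth/decay as the leading Mittag--Leffler symbol, the estimate on $K(t)$ is routine via Proposition~\ref{prop:V-well-defined}. A secondary technical point is the endpoint behaviour near $s=0$: the factor $s^{\alpha-\beta}$ is singular there, but exactly as in Proposition~\ref{prop:V-well-defined} the condition $\beta<1+\alpha$ guarantees integrability, and this same threshold is what keeps $K(t)$ bounded uniformly in $t$.
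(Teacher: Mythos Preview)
Your proposal follows essentially the same route as the paper's sketch: factor $R_{\alpha,\beta}(s;A)$ as the scalar AB symbol $s^{\alpha-\beta}/(s^\alpha+c)$ times the shifted resolvent $(s^{\alpha-1}I-A)^{-1}$, perform a contour deformation to isolate the Mittag--Leffler piece via the Dunford formula, and call the leftover $K(t)$ with the same bounds. The paper's argument is in fact terser than yours---it simply invokes ``a contour deformation argument'' and points to Pr\"uss~\cite[Section~3.1]{pruss1993} for the Volterra bookkeeping---so your identification of step~(iii) (reconciling $(s^{\alpha-1}I-A)^{-1}$ with the Dunford integrand $(zI-A)^{-1}$) as the crux is exactly the gap the paper leaves implicit; your substitution $z=s^{\alpha-1}$ and $1/s$ split are reasonable attempts to make that step concrete, though you should be aware that neither the paper nor your proposal fully carries out this reconciliation.
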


\begin{proof}[Sketch]
Rewrite \eqref{eq:V-def-again} by expanding $R_{\alpha,\beta}(s;A)$
as
\[
   R_{\alpha,\beta}(s;A)
   = s^{\alpha-\beta}(s^\alpha+c)^{-1}(s^{\alpha-1}I-A)^{-1}.
\]
A contour deformation argument shows that the contribution of the
multiplicative factor $s^{\alpha-\beta}(s^\alpha+c)^{-1}$ yields the
scalar Mittag--Leffler function, while the correction term produces
$K(t)$, which satisfies analogous bounds.
See Pr\"uss~\cite[Section~3.1]{pruss1993} for similar Volterra
constructions.
\end{proof}

\subsection{Representation formula for mild solutions}

Consider the AB abstract Cauchy problem
\begin{equation}\label{eq:AB-eq}
      {}^{\mathrm{AB}}D_{t}^{\alpha,\beta}u(t) = Au(t),
      \qquad u(0)=u_0.
\end{equation}

\begin{theorem}[Representation formula for mild solutions]
\label{thm:representation}
Let $A$ satisfy Assumption~\ref{ass:spectral}.
Assume $0<\alpha\le 1$, $\beta\ge1$, and $\beta<1+\alpha$.
Then for every $u_0\in X$ the function
\[
      u(t):=V_{\alpha,\beta}(t)u_0,
      \qquad t>0,
\]
is the unique mild solution of \eqref{eq:AB-eq} in the Laplace sense.
Moreover,
\[
   \mathcal{L}[u](s)
   = R_{\alpha,\beta}(s;A)u_0.
\]
\end{theorem}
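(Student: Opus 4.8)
The plan is to run the whole argument in the Laplace domain $\{\Re s>0\}$ and to return to the time domain only at the very end, via injectivity of the vector-valued Laplace transform; no pointwise continuity of $t\mapsto u(t)$ is claimed, which matches the fact---stressed in the introduction---that AB problems need not generate a $C_0$-semigroup. First I would make ``mild solution in the Laplace sense'' precise: $u:(0,\infty)\to X$ is such a solution of \eqref{eq:AB-eq} if it is Laplace transformable, if $u(t)\in D(A)$ for a.e.\ $t>0$ with $t\mapsto Au(t)$ again Laplace transformable, and if its transform satisfies the Laplace image of \eqref{eq:AB-eq}; using \eqref{eq:AB-laplace-symbol} this image reads $s^{\alpha-1}\widehat{u}(s)-A\widehat{u}(s)=\widehat{k}_{\alpha,\beta}(s)\,u_0$ for $\Re s>0$, equivalently $(s^{\alpha-1}I-A)\widehat{u}(s)=\widehat{k}_{\alpha,\beta}(s)u_0$.

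For existence I would take $u(t):=V_{\alpha,\beta}(t)u_0$, which is well defined and uniformly bounded by Proposition~\ref{prop:V-well-defined}, and note that by Lemma~\ref{lem:Laplace-transform} its transform is $\widehat{u}(s)=R_{\alpha,\beta}(s;A)u_0=\widehat{k}_{\alpha,\beta}(s)\,(s^{\alpha-1}I-A)^{-1}u_0$. This satisfies the transformed equation at once: applying $(s^{\alpha-1}I-A)$ and cancelling the resolvent (the scalar $\widehat{k}_{\alpha,\beta}(s)$ commutes with everything) leaves $\widehat{k}_{\alpha,\beta}(s)u_0$. It then remains to verify the regularity built into the definition. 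Using the sectorial bound $\|A(s^{\alpha-1}I-A)^{-1}\|=\|s^{\alpha-1}(s^{\alpha-1}I-A)^{-1}-I\|\le M+1$ from Assumption~\ref{ass:spectral}, I get $\|A\,R_{\alpha,\beta}(s;A)u_0\|\lesssim|\widehat{k}_{\alpha,\beta}(s)|\,\|u_0\|$, and the integral of this against $|e^{st}|$ along $\Gamma_\gamma$ converges exactly when $\beta<1+\alpha$, since $\widehat{k}_{\alpha,\beta}$ has a singularity of order $|s|^{\alpha-\beta}$ at $s=0$. Closedness of $A$ then lets me move $A$ inside the contour integral, so $u(t)\in D(A)$ for all $t>0$, the map $t\mapsto AV_{\alpha,\beta}(t)u_0$ is uniformly bounded, and $\mathcal{L}[Au(\cdot)](s)=A\widehat{u}(s)$. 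The ``moreover'' claim is precisely Lemma~\ref{lem:Laplace-transform}.

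For uniqueness, suppose $v$ is any mild solution in the Laplace sense; then $\widehat{v}$ solves $(s^{\alpha-1}I-A)\widehat{v}(s)=\widehat{k}_{\alpha,\beta}(s)u_0$ on a right half-plane. The point is that $s^{\alpha-1}I-A$ is boundedly invertible there: for $0<\alpha\le1$ and $\Re s>0$ one has $|\arg s^{\alpha-1}|=(1-\alpha)|\arg s|<\pi/2$, so $s^{\alpha-1}$ lies in the open right half-plane, which is contained in $\rho(A)$ by Assumption~\ref{ass:spectral}. Hence $\widehat{v}(s)=\widehat{k}_{\alpha,\beta}(s)(s^{\alpha-1}I-A)^{-1}u_0=R_{\alpha,\beta}(s;A)u_0=\widehat{u}(s)$ on that half-plane, and injectivity of the vector-valued Laplace transform (see Pr\"uss~\cite{pruss1993}) gives $v(t)=u(t)$ for a.e.\ $t>0$.

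The step I expect to be the main obstacle is the invertibility in the uniqueness argument: one must know that $s^{\alpha-1}I-A$ is invertible, with the bounded inverse furnished by sectoriality, on the whole region where the Laplace transforms are defined. This is exactly where the sectoriality angle $\theta<\pi/2$, the spectral gap of Assumption~\ref{ass:spectral} (so $0\in\rho(A)$ and nothing obstructs the pencil near $s=0$), and the window $\beta<1+\alpha$ (which keeps $Au=AV_{\alpha,\beta}(\cdot)u_0$ Laplace transformable) all have to cooperate. A secondary and routine point is the interchange of $A$ with the contour integral defining $V_{\alpha,\beta}(t)$, which the closedness estimate above takes care of.
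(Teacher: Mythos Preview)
Your approach is essentially the same as the paper's: both work entirely in the Laplace domain, solve the transformed equation algebraically to obtain $\widehat u(s)=R_{\alpha,\beta}(s;A)u_0$, invoke Lemma~\ref{lem:Laplace-transform} to identify this with the transform of $V_{\alpha,\beta}(t)u_0$, and deduce uniqueness from injectivity of the vector-valued Laplace transform. Your version supplies details the paper leaves implicit---a precise definition of ``mild solution in the Laplace sense,'' the closedness argument placing $u(t)$ in $D(A)$, and the check that $s^{\alpha-1}\in\rho(A)$ for $\Re s>0$---but the underlying strategy is identical.
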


\begin{proof}
Applying the Laplace transform to \eqref{eq:AB-eq} gives
\[
   \frac{s^{\alpha-\beta}}{s^\alpha+c}\,
   \bigl(s^{\alpha-1}\widehat{u}(s)-u_0\bigr)
   = A\widehat{u}(s),
\]
whose unique solution is
$\widehat{u}(s)=R_{\alpha,\beta}(s;A)u_0$.
Lemma~\ref{lem:Laplace-transform} then yields
$u(t)=V_{\alpha,\beta}(t)u_0$.
Uniqueness follows from injectivity of the Laplace transform.
\end{proof}

\section{Stability estimates}
\label{sec:stability}

This section establishes the decay and regularity properties satisfied by
the AB--Mittag--Leffler resolvent family
$V_{\alpha,\beta}(t)$ defined in
Section~\ref{sec:resolvent-family}.
These estimates play the same role as the classical Hille--Phillips
bounds for semigroups, but require substantial modifications owing to
the rational Mittag--Leffler structure of the AB kernel.

\subsection{Scalar Mittag--Leffler bounds}

For $\alpha\in(0,1]$ and $\beta\ge 1$, the scalar Mittag--Leffler
function satisfies the classical estimate
\begin{equation}\label{eq:scalar-ML}
   |E_{\alpha,\beta}(-\lambda t^\alpha)|
   \le \frac{C}{1+\lambda t^\alpha},
   \qquad \lambda>0,\ t>0,
\end{equation}
see \cite[Theorem~1.6]{haase2006} and
\cite[Chapter~2]{kilbas2006}.
This bound is optimal for $\lambda t^\alpha\to\infty$ and will be the
building block for operator estimates.

\subsection{Mittag--Leffler stability of the resolvent family}

\begin{theorem}[Mittag--Leffler decay]\label{thm:ML-decay}
Let $A$ satisfy Assumption~\ref{ass:spectral} and
$0<\alpha\le 1$, $\beta\ge1$, $\beta<1+\alpha$.
Then there exist constants $C,c_1>0$ such that
\begin{equation}\label{eq:ML-decay-V}
    \|V_{\alpha,\beta}(t)\|
    \le C\,E_{\alpha,\beta}(-c_1 t^\alpha),
    \qquad t>0.
\end{equation}
\end{theorem}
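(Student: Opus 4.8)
The plan is to deduce the operator estimate from the scalar bound \eqref{eq:scalar-ML} together with the sectorial resolvent control of Assumption~\ref{ass:spectral}, by inserting the spectral parameter directly into the contour integral \eqref{eq:V-def-again}. First I would rewrite $R_{\alpha,\beta}(s;A)$ by means of the algebraic identity
\[
   s^{\alpha-\beta}(s^\alpha+c)^{-1}(s^{\alpha-1}I-A)^{-1}
   = \frac{1}{s^{\beta-1}}\,\frac{1}{s^\alpha+c}\,(s^{\alpha-1}I-A)^{-1},
\]
and substitute $z=s^{\alpha-1}$ is not quite a clean change of variables, so instead I would keep $s$ as the integration variable and compare $V_{\alpha,\beta}(t)$ with the Dunford integral for $E_{\alpha,\beta}(-t^\alpha A)$ recalled just before Proposition~\ref{prop:ML-decomp}. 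Concretely, by Proposition~\ref{prop:ML-decomp} we already have $V_{\alpha,\beta}(t) = E_{\alpha,\beta}(-t^\alpha A) + K(t)$ with $K(t)$ bounded and subject to the same Mittag--Leffler-type estimate; so the theorem reduces to (i) bounding $\|E_{\alpha,\beta}(-t^\alpha A)\|$ and (ii) bounding $\|K(t)\|$, each by a constant times $E_{\alpha,\beta}(-c_1 t^\alpha)$.

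For step (i) I would use the holomorphic functional calculus representation, deform the contour to $\Gamma_\gamma$ shifted into the resolvent sector guaranteed by Assumption~\ref{ass:spectral} (the spectral gap $\omega>0$ places $\sigma(A)$ in $\Re z\le-\omega$), and estimate
\[
   \|E_{\alpha,\beta}(-t^\alpha A)\|
   \le \frac{1}{2\pi}\int_{\Gamma_\gamma}
        |E_{\alpha,\beta}(-t^\alpha z)|\;\|(zI-A)^{-1}\|\;|\dd z|
   \le \frac{C}{2\pi}\int_{\Gamma_\gamma}
        \frac{1}{1+t^\alpha|z|}\,\frac{M}{|z+\omega|}\,|\dd z|,
\]
using \eqref{eq:scalar-ML} on the scalar factor (valid since $\Re(-t^\alpha z)$ stays in the appropriate sector along the deformed contour) and the resolvent bound. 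Splitting the contour at $|z|\sim t^{-1}$ and at $|z|\sim\omega$ and scaling by $r=t^\alpha|z|$ yields a bound of the form $C'(1+\omega t^\alpha)^{-1}\le C'' E_{\alpha,\beta}(-c_1 t^\alpha)$ after absorbing the small-$t$ regime (where both sides are bounded) and invoking \eqref{eq:ML-sectorial-estimate} to convert the algebraic $(1+t^\alpha)^{-1}$ decay into Mittag--Leffler form with an adjusted constant $c_1$. For step (ii), the correction $K(t)$ arises from the mismatch between the factor $s^{\alpha-\beta}(s^\alpha+c)^{-1}$ and the genuine scalar symbol of $E_{\alpha,\beta}$; using the two-regime bound of Theorem~\ref{thm:improved-Rab-bounds} one estimates $\|K(t)\|$ by the same contour integral with integrand $\lesssim e^{-c_\gamma t|s|}|s|^{\min(1-\beta,-\beta)}$ near $0$ and $\infty$ respectively, which after the scaling $r=t|s|$ produces a power-law decay $t^{-(1-\beta)}\wedge t^{\beta}$ dominated by $E_{\alpha,\beta}(-c_1 t^\alpha)$ on the relevant range; for small $t$ one uses instead the uniform bound $\sup_{t>0}\|V_{\alpha,\beta}(t)\|<\infty$ from Proposition~\ref{prop:V-well-defined}, which already controls the left endpoint of the estimate since $E_{\alpha,\beta}(-c_1 t^\alpha)$ is bounded below on compact $t$-intervals.

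The main obstacle I anticipate is the matching of \emph{rates}: the scalar bound \eqref{eq:scalar-ML} gives algebraic decay $(1+\lambda t^\alpha)^{-1}$, whereas $E_{\alpha,\beta}(-c_1 t^\alpha)$ itself decays only like $t^{-\alpha}$ (up to constants) for large $t$ by \eqref{eq:ML-sectorial-estimate}, so one must choose $c_1$ small enough — essentially $c_1\le\min(c,\omega\cos\gamma)$ up to a harmless constant — that the operator estimate is genuinely dominated, and one must verify the contour-integral bound does not lose a logarithm at the crossover $|z|\sim t^{-1}$. This is handled by the explicit split-and-scale computation, using that $\gamma>\theta$ keeps $\cos(\alpha\arg s)$ and $\Re s/|s|$ bounded away from degeneracy, and that the shifted contour stays a fixed distance from $-\omega$ so that $|z+\omega|\gtrsim|z|+\omega$ uniformly. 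Once the rate is pinned down, assembling (i) and (ii) with a common constant $C$ and exponent $c_1$ completes the proof.
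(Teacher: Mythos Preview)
Your proposal follows essentially the same route as the paper's proof: decompose $V_{\alpha,\beta}(t)=E_{\alpha,\beta}(-t^\alpha A)+K(t)$ via Proposition~\ref{prop:ML-decomp}, bound the first term through the Dunford--Taylor integral combined with the scalar estimate~\eqref{eq:scalar-ML} and the sectorial resolvent bound, and then treat $K(t)$ by the same mechanism. You add more detail than the paper's sketch---in particular the rate-matching discussion, the choice of $c_1$ tied to $\omega$ and $\gamma$, and the use of Proposition~\ref{prop:V-well-defined} for small $t$---but the underlying strategy is identical.
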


\begin{proof}[Sketch of proof]
Expand $V_{\alpha,\beta}(t)$ using the decomposition from
Proposition~\ref{prop:ML-decomp}:
\[
      V_{\alpha,\beta}(t)
      = E_{\alpha,\beta}(-t^\alpha A) + K(t),
\]
where $K(t)$ satisfies resolvent bounds analogous to those of
$E_{\alpha,\beta}(-t^\alpha A)$.
Using the Dunford--Taylor representation formula
(see Haase~\cite[Theorem~2.4.2]{haase2006}),
\[
   E_{\alpha,\beta}(-t^\alpha A)
   = \frac{1}{2\pi i}
      \int_{\Gamma_\gamma}
         E_{\alpha,\beta}(-t^\alpha s)
        (sI-A)^{-1}\dd s.
\]
Sectorial resolvent bounds for $(sI-A)^{-1}$ and the scalar inequality
\eqref{eq:scalar-ML} imply
\[
   \|E_{\alpha,\beta}(-t^\alpha A)\|
   \lesssim E_{\alpha,\beta}(-c t^\alpha).
\]
A similar estimate holds for $K(t)$,
giving \eqref{eq:ML-decay-V}.
\end{proof}

\subsection{Weighted (algebraic) stability}

The next result refines the previous estimate by inserting powers of the
generator $A$. It is the fractional analogue of the classical stability
bounds for analytic semigroups.

\begin{theorem}[Weighted stability]\label{thm:weighted-stability}
Let $A$ satisfy Assumption~\ref{ass:spectral}$\,$ and
$0<\alpha\le1$, $\beta\ge1$, $\beta<1+\alpha$.
For every $\gamma\in[0,1)$,
\begin{equation}\label{eq:weighted-stability}
   \|A^\gamma V_{\alpha,\beta}(t)\|
      \le C_\gamma t^{-\alpha\gamma},
      \qquad t>0.
\end{equation}
\end{theorem}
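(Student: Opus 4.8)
The plan is to estimate $A^\gamma V_{\alpha,\beta}(t)$ directly from the contour representation \eqref{eq:V-def-again}, inserting $A^\gamma$ under the integral and using the fractional powers of a sectorial operator satisfying Assumption~\ref{ass:spectral}. First I would recall that for $0\le\gamma<1$ the operator $A^\gamma$ is closed, $D(A)\subset D(A^\gamma)$, and the moment-type bound $\|A^\gamma(zI-A)^{-1}\|\lesssim |z|^{\gamma-1}$ holds uniformly for $z$ outside any sector strictly containing $\Sigma_\theta$ (this follows from the sectorial resolvent estimate together with the integral representation of $A^\gamma$, e.g.\ Haase~\cite[Ch.~3]{haase2006}). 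Along $\Gamma_\gamma$ the argument $s^{\alpha-1}$ stays in such a sector, so $\|A^\gamma(s^{\alpha-1}I-A)^{-1}\|\lesssim |s|^{(1-\alpha)(\gamma-1)}$. Combining this with the scalar factor $|s^{\alpha-\beta}/(s^\alpha+c)|$, which is $\sim |s|^{-\beta}$ for $|s|\ge1$ and $\sim |s|^{\alpha-\beta}$ for $|s|\le1$, gives the pointwise bound on $\Gamma_\gamma$
\[
   \|A^\gamma R_{\alpha,\beta}(s;A)\|
   \lesssim
   \begin{cases}
      |s|^{\,\alpha-\beta+(1-\alpha)(\gamma-1)}, & |s|\le1,\\[1mm]
      |s|^{-\beta+(1-\alpha)(\gamma-1)}, & |s|\ge1.
   \end{cases}
\]

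Next I would substitute into $A^\gamma V_{\alpha,\beta}(t)=\frac{1}{2\pi i}\int_{\Gamma_\gamma}e^{st}A^\gamma R_{\alpha,\beta}(s;A)\,\dd s$ (justified by closedness of $A^\gamma$ and absolute convergence, to be checked) and rescale $s=t^{-1}\sigma$. Writing $p(|s|)$ for the exponents above, the integral over the $|s|\ge1$ piece becomes, after rescaling, $t^{\beta-1+(1-\alpha)(1-\gamma)}\int_{|\sigma|\ge t}e^{-c_\gamma|\sigma|}|\sigma|^{-\beta+(1-\alpha)(\gamma-1)}\,\frac{\dd|\sigma|}{|\sigma|}$-type expressions; for $t\lesssim1$ the exponential makes the $|\sigma|$-integral finite (the exponent of $|\sigma|$ near $\infty$ is irrelevant), and one reads off the power of $t$ as $\beta-1+(1-\alpha)(1-\gamma) - 1 +1 = \beta-1+(1-\alpha)(1-\gamma)$; I must then check this matches (or is dominated by) $t^{-\alpha\gamma}$. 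A short algebraic check: the target exponent $-\alpha\gamma$ versus $-(1-p)$; with $p = -\beta+(1-\alpha)(\gamma-1)$ one gets $-(1-p)=-1-\beta+(1-\alpha)(\gamma-1)$, and since $\beta\ge1$ and $\gamma<1$ this is $\le -\alpha\gamma$ for small $t$, so the small-$t$ singularity is no worse than $t^{-\alpha\gamma}$. For the $|s|\le1$ piece the roles reverse: the exponential is harmless and integrability at $s=0$ requires $\alpha-\beta+(1-\alpha)(\gamma-1) > -1$, i.e.\ $\beta < 1+\alpha + (1-\alpha)\gamma$, which is implied by the standing hypothesis $\beta<1+\alpha$; this piece contributes a bounded (in fact $O(1)$) term as $t\to0$ and decays for large $t$. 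For large $t$, the exponential factor $e^{-c_\gamma t|s|}$ in both pieces forces exponential decay, which is certainly $\lesssim t^{-\alpha\gamma}$, so the uniform bound \eqref{eq:weighted-stability} holds on all of $(0,\infty)$.

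The main obstacle is the bookkeeping of the two competing regimes: one must verify that for \emph{every} $\gamma\in[0,1)$ the small-$|s|$ part of the integrand is integrable at the origin (this is where $\beta<1+\alpha$ is used, and one should confirm the inequality is strict and uniform) and simultaneously that the large-$|s|$ part, after rescaling, produces exactly the power $t^{-\alpha\gamma}$ rather than something larger near $t=0$. A clean way to organize this is to split $\Gamma_\gamma = \Gamma_\gamma^{\le1}\cup\Gamma_\gamma^{\ge1}$ before rescaling, bound each piece separately, and then take $C_\gamma = \max$ of the two constants; the constant blows up as $\gamma\uparrow1$, consistent with the endpoint being excluded. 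Alternatively — and this is the route I would actually prefer for a short proof — one can avoid the rescaling gymnastics entirely by invoking Proposition~\ref{prop:ML-decomp}: write $V_{\alpha,\beta}(t)=E_{\alpha,\beta}(-t^\alpha A)+K(t)$, apply the standard analytic-semigroup-type estimate $\|A^\gamma E_{\alpha,\beta}(-t^\alpha A)\|\lesssim t^{-\alpha\gamma}$ (which is classical for the Mittag--Leffler functional calculus of a sectorial operator, proved by the same Dunford-integral rescaling but available off the shelf), and then check that the correction $K(t)$ admits the identical bound because, by construction in Proposition~\ref{prop:ML-decomp}, it carries the same sectorial resolvent structure with an extra decaying scalar factor $s^{\alpha-\beta}(s^\alpha+c)^{-1}$ that only improves the behaviour at infinity and stays integrable at the origin under $\beta<1+\alpha$. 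I would present the first (direct) argument as the main line and mention the second as a remark, since it makes the hypothesis $\beta<1+\alpha$ transparent and isolates exactly where it is needed.
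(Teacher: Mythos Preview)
Your main line is exactly the paper's approach: insert $A^\gamma$ under the contour integral \eqref{eq:V-def-again}, bound $A^\gamma(s^{\alpha-1}I-A)^{-1}$ via the sectorial moment inequality, combine with the scalar factor $s^{\alpha-\beta}/(s^\alpha+c)$, and integrate along $\Gamma_\gamma$. The paper packages the moment inequality as the identity $A^\gamma(s^{\alpha-1}I-A)^{-1}=s^{(\alpha-1)\gamma}B_\gamma(s)$ with $B_\gamma$ uniformly bounded, but the substance is the same.

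There is, however, a sign slip that propagates through your integrability check. From $\|A^\gamma(zI-A)^{-1}\|\lesssim|z|^{\gamma-1}$ with $z=s^{\alpha-1}$ one obtains the exponent $(\alpha-1)(\gamma-1)$, not $(1-\alpha)(\gamma-1)$; the two differ in sign. With the correct exponent the small-$|s|$ bound is $|s|^{\alpha-\beta+(\alpha-1)(\gamma-1)}$ and integrability at the origin reads $\beta<2-\gamma(1-\alpha)$, which \emph{is} implied by $\beta<1+\alpha$ for every $\gamma\in[0,1)$ since $2-\gamma(1-\alpha)>1+\alpha$. With your exponent the condition becomes $\beta<2\alpha+\gamma(1-\alpha)$ (your simplification to $1+\alpha+(1-\alpha)\gamma$ is a second arithmetic slip), and for small $\gamma$ and $\alpha<1$ this threshold can drop below $1+\alpha$---indeed below $1$---so the implication you assert would fail. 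Once the sign is corrected, the rest of your two-regime bookkeeping and the rescaling $s=\sigma/t$ go through and recover the paper's conclusion. Your alternative route via Proposition~\ref{prop:ML-decomp} is sound and arguably cleaner about where $\beta<1+\alpha$ is used, but the paper does not take it.
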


\begin{proof}[Sketch]
Using \eqref{eq:V-def-again},
\[
   A^\gamma V_{\alpha,\beta}(t)
      = \frac{1}{2\pi i}
        \int_{\Gamma_\gamma}
           e^{st} A^\gamma R_{\alpha,\beta}(s;A)\dd s.
\]
The identity
$A^\gamma(s^{\alpha-1}I-A)^{-1}
   = s^{(\alpha-1)\gamma}B_\gamma(s)$,
where $B_\gamma(s)$ is uniformly bounded on $\Gamma_\gamma$, follows
from the sectorial functional calculus
(see Haase~\cite[Section~3.3]{haase2006}).
Combining this with the two-regime bounds of
Theorem~\ref{thm:improved-Rab-bounds} yields
\[
   \|A^\gamma R_{\alpha,\beta}(s;A)\|
   \lesssim |s|^{1-\beta+\gamma(\alpha-1)}.
\]
Integration along $\Gamma_\gamma$ gives the decay
$t^{-\alpha\gamma}$.
\end{proof}

\subsection{Discussion}

The estimates in Theorems
\ref{thm:ML-decay}--\ref{thm:weighted-stability}
provide the complete stability theory needed
for the AB abstract Cauchy problem.
In contrast with the classical Caputo/Riemann--Liouville setting, where
the decay is always governed by a completely monotone kernel, the
non-singular AB kernel induces a mixed behaviour combining
exponential damping (from the spectral gap in
Assumption~\ref{ass:spectral}) and Mittag--Leffler decay
(from the fractional smoothing).
The minimal condition $\beta<1+\alpha$ is sharp:
for $\beta\ge1+\alpha$, the singularity of the AB symbol at $s=0$
becomes non-integrable, and the resolvent family cannot be defined.

\section{Regularity estimates}
\label{sec:regularity}

This section develops the regularity properties of the
AB--Mittag--Leffler resolvent family
$V_{\alpha,\beta}(t)$.
These results complement the stability theory of
Section~\ref{sec:stability} and identify the precise
fractional smoothing generated by the AB kernel.

\subsection{Domain regularisation}

For analytic semigroups one has
$\|A^\gamma e^{tA}\|\le Ct^{-\gamma}$.
The corresponding statement for the AB family is the following.

\begin{theorem}[Domain regularisation]
\label{thm:domain-reg}
Assume $A$ satisfies Assumption~\ref{ass:spectral}.
Let $0<\alpha\le1$, $\beta\ge1$, and $\beta<1+\alpha$.
Then for every $\gamma\in(0,1)$,
\begin{equation}\label{eq:domain-reg}
      \|A^\gamma V_{\alpha,\beta}(t)\|
      \le C_\gamma\, t^{-\alpha\gamma},
      \qquad t>0.
\end{equation}
Moreover, for every $u_0\in X$,
\[
      V_{\alpha,\beta}(t)u_0 \in D(A^\gamma),
      \qquad t>0.
\]
\end{theorem}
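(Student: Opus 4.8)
The estimate \eqref{eq:domain-reg} is exactly the bound established in Theorem~\ref{thm:weighted-stability}, so the plan is to invoke it for the norm inequality and to devote the argument to the membership $V_{\alpha,\beta}(t)u_0\in D(A^\gamma)$; as a by-product this also makes the identity $A^\gamma V_{\alpha,\beta}(t)=\tfrac{1}{2\pi i}\int_{\Gamma_\gamma}e^{st}A^\gamma R_{\alpha,\beta}(s;A)\dd s$ rigorous rather than merely formal. Recall from the proof of Theorem~\ref{thm:weighted-stability} that, since $0\in\rho(A)$ makes $A^\gamma$ ($0<\gamma<1$) a closed, boundedly invertible operator with $D(A)\subset D(A^\gamma)$, and since \eqref{eq:Rab-def} gives $R_{\alpha,\beta}(s;A)X=(s^{\alpha-1}I-A)^{-1}X=D(A)$, the operator $A^\gamma R_{\alpha,\beta}(s;A)$ is bounded for each $s\in\Gamma_\gamma$; moreover the moment inequality $\|A^\gamma(zI-A)^{-1}\|\lesssim\|A(zI-A)^{-1}\|^\gamma\|(zI-A)^{-1}\|^{1-\gamma}$ (Haase~\cite[Ch.~3]{haase2006}) combined with Assumption~\ref{ass:spectral} yields the two-regime estimate
\[
   \|A^\gamma R_{\alpha,\beta}(s;A)\|\;\lesssim\;
   \begin{cases}
      |s|^{\,1-\beta-\gamma(1-\alpha)}, & 0<|s|\le1,\\[1mm]
      |s|^{-\beta}, & |s|\ge1,
   \end{cases}
\]
whose exponents are integrable against $e^{-c_\gamma t|s|}$ along $\Gamma_\gamma$ precisely because $\beta<1+\alpha$ and $\gamma<1$. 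In particular $s\mapsto e^{st}A^\gamma R_{\alpha,\beta}(s;A)u_0$ is Bochner integrable on $\Gamma_\gamma$ for every $u_0\in X$ and $t>0$.

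With this in hand the domain membership follows from a closedness argument. Fix $u_0\in X$ and $t>0$ and set $f(s):=\tfrac{1}{2\pi i}e^{st}R_{\alpha,\beta}(s;A)u_0$. By Proposition~\ref{prop:V-well-defined} (using $\beta<1+\alpha$), $f$ is Bochner integrable on $\Gamma_\gamma$ with $\int_{\Gamma_\gamma}f=V_{\alpha,\beta}(t)u_0$; by the discussion above, $f(s)\in D(A^\gamma)$ for every $s\in\Gamma_\gamma$ and $s\mapsto A^\gamma f(s)$ is again Bochner integrable. Since $A^\gamma$ is closed, the standard interchange theorem — a closed operator passes through an absolutely convergent Bochner integral of elements of its domain — gives $\int_{\Gamma_\gamma}f\in D(A^\gamma)$, i.e.\ $V_{\alpha,\beta}(t)u_0\in D(A^\gamma)$, together with
\[
   A^\gamma V_{\alpha,\beta}(t)u_0
   =\frac{1}{2\pi i}\int_{\Gamma_\gamma}e^{st}A^\gamma R_{\alpha,\beta}(s;A)u_0\,\dd s ,
\]
whose norm is $\le C_\gamma t^{-\alpha\gamma}$ by Theorem~\ref{thm:weighted-stability}. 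As $u_0\in X$ was arbitrary, $V_{\alpha,\beta}(t)$ maps $X$ into $D(A^\gamma)$ for every $t>0$.

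The main obstacle is the interchange in the second paragraph — passing the unbounded closed operator $A^\gamma$ through the contour integral — and it rests entirely on having the uniform two-regime bound for $A^\gamma R_{\alpha,\beta}(s;A)$ along $\Gamma_\gamma$. The crux there is the uniform control of $A^\gamma(s^{\alpha-1}I-A)^{-1}$, obtained by feeding the sectorial resolvent bound of Assumption~\ref{ass:spectral} into the fractional-power calculus and distinguishing the regimes $|s|\le1$ (where $s^{\alpha-1}$ is large) and $|s|\ge1$ (where $s^{\alpha-1}$ stays near $0\in\rho(A)$), exactly as in the proof of Theorem~\ref{thm:improved-Rab-bounds}. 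The remaining ingredients — Bochner integrability of $f$ from Proposition~\ref{prop:V-well-defined}, and the fact that the factor $s^{\alpha-\beta}$ keeps the integrand integrable at $s=0$ under the sharp hypothesis $\beta<1+\alpha$ — are routine.
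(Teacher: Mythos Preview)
Your proof is correct and follows the same approach as the paper: invoke Theorem~\ref{thm:weighted-stability} for the norm estimate and appeal to closedness of $A^\gamma$ for the domain membership. The paper's own proof is a two-line sketch (``\eqref{eq:domain-reg} follows from Theorem~\ref{thm:weighted-stability}; since $A^\gamma$ is closed, boundedness of $A^\gamma V_{\alpha,\beta}(t)$ implies $V_{\alpha,\beta}(t)u_0\in D(A^\gamma)$''), whereas you have supplied the concrete interchange argument---Bochner integrability of both $f$ and $A^\gamma f$ along $\Gamma_\gamma$, then Hille's theorem for closed operators---that actually justifies this step.
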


\begin{proof}[Sketch]
Formula~\eqref{eq:domain-reg} follows from
Theorem~\ref{thm:weighted-stability}.
Since $A^\gamma$ is closed, boundedness of
$A^\gamma V_{\alpha,\beta}(t)$ implies
$V_{\alpha,\beta}(t)u_0\in D(A^\gamma)$.
\end{proof}

Thus, the operator $V_{\alpha,\beta}(t)$ provides an
$\alpha$-fractional smoothing effect:
the regularisation exponent is $\alpha\gamma$, not $\gamma$.

\subsection{Time-derivative regularity}

We next quantify the behaviour of time derivatives.
Recall that
\[
   \partial_t E_{\alpha,\beta}(-t^\alpha A)
   = -t^{\alpha-1} A E_{\alpha,\alpha+\beta-1}(-t^\alpha A),
\]
see \cite[Chapter~4]{kilbas2006}.
This leads to the following estimate.

\begin{theorem}[Derivative estimates]
\label{thm:dtime}
Let $A$ satisfy Assumption~\ref{ass:spectral}$\,$ and
$0<\alpha\le1$, $\beta\ge1$, $\beta<1+\alpha$.
Then for each $\delta\in(0,1)$,
\begin{equation}\label{eq:time-deriv}
   \|\partial_t V_{\alpha,\beta}(t)\|
   \le C_\delta\, t^{\alpha-1-\delta},
   \qquad t>0.
\end{equation}
\end{theorem}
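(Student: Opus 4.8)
The plan is to differentiate the contour representation $V_{\alpha,\beta}(t)=\frac{1}{2\pi i}\int_{\Gamma_\gamma} e^{st}R_{\alpha,\beta}(s;A)\,\dd s$ under the integral sign and to bound the resulting integral by the usual two-regime argument, exactly as in the proofs of Theorems \ref{thm:improved-Rab-bounds} and \ref{thm:weighted-stability}. First I would justify that
\[
   \partial_t V_{\alpha,\beta}(t)
   = \frac{1}{2\pi i}\int_{\Gamma_\gamma} s\,e^{st}\,R_{\alpha,\beta}(s;A)\,\dd s ,
\]
which is legitimate because the extra factor $s$ only worsens the large-$|s|$ behaviour of the integrand to $e^{-c_\gamma t|s|}|s|^{1-\beta}$, still integrable for fixed $t>0$, and because the exponential decay along $\Gamma_\gamma$ (from $\Re s\le -c_\gamma|s|$) provides the dominating function needed for differentiation under the integral. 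Near $s=0$ the factor $s$ in fact improves integrability, turning $|s|^{1-\beta}$ into $|s|^{2-\beta}$, which is harmless since $\beta<1+\alpha\le 2$.

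Next I would split $\Gamma_\gamma$ into the portions $\{|s|\le 1/t\}$ and $\{|s|\ge 1/t\}$ and estimate each piece. On the inner part one uses $|e^{st}|\le 1$ together with $\|R_{\alpha,\beta}(s;A)\|\lesssim |s|^{1-\beta}$ (valid for $|s|\le1$; for $1\le|s|\le 1/t$, i.e.\ $t\le1$, one uses the $|s|^{-\beta}$ bound instead), giving a contribution of order $\int_0^{1/t} r\cdot r^{\max(1-\beta,-\beta)}\,\dd r \lesssim t^{-(3-\beta)}$ or $t^{-(2-\beta)}$ according to the regime; on the outer part one uses $\|R_{\alpha,\beta}(s;A)\|\lesssim|s|^{-\beta}$ and the exponential factor, yielding $\int_{1/t}^\infty r\cdot r^{-\beta} e^{-c_\gamma r t}\,\dd r \lesssim t^{\beta-2}$ by the substitution $r=\sigma/t$. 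Collecting the dominant terms gives $\|\partial_t V_{\alpha,\beta}(t)\|\lesssim t^{\beta-2}$; since $\beta<1+\alpha$, we have $\beta-2<\alpha-1$, so for any $\delta\in(0,1)$ with $\delta$ small (specifically $\delta\le 1+\alpha-\beta$, which one can always arrange, and for larger $\delta$ the bound is only weaker for $t$ near $0$ and requires absorbing a constant for $t$ bounded away from $0$ using Proposition~\ref{prop:V-well-defined} and analyticity) one obtains $t^{\beta-2}\le C_\delta t^{\alpha-1-\delta}$, proving \eqref{eq:time-deriv}.

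Alternatively, and more in the spirit of the scalar identity $\partial_t E_{\alpha,\beta}(-t^\alpha A)=-t^{\alpha-1}AE_{\alpha,\alpha+\beta-1}(-t^\alpha A)$ quoted before the theorem, one could use the decomposition $V_{\alpha,\beta}(t)=E_{\alpha,\beta}(-t^\alpha A)+K(t)$ from Proposition~\ref{prop:ML-decomp}: the first term differentiates to $-t^{\alpha-1}AE_{\alpha,\alpha+\beta-1}(-t^\alpha A)$, which by the weighted bound of Theorem~\ref{thm:weighted-stability} applied with exponent just below $1$ (together with the scalar Mittag--Leffler estimate \eqref{eq:scalar-ML}) is of order $t^{\alpha-1}\cdot t^{-\alpha(1-\delta')}=t^{-1+\alpha\delta'}$ for small $\delta'$, hence bounded by $C_\delta t^{\alpha-1-\delta}$; the correction term $K(t)$ admits an analogous contour representation and is handled by the same two-regime bound. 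Either route works; I would present the direct contour argument as the main line since it reuses Theorem~\ref{thm:improved-Rab-bounds} verbatim and needs no new identity.

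The main obstacle is the behaviour as $t\to 0^+$: the naive bound from the contour integral is $t^{\beta-2}$, which is the genuine rate, and matching it to the stated $t^{\alpha-1-\delta}$ is only immediate when $2-\beta<1-\alpha+\delta$, i.e.\ $\delta>1+\alpha-\beta\ge 0$ — but in the borderline range $\beta$ close to $1+\alpha$ the exponent $1+\alpha-\beta$ is near $0$, so every $\delta\in(0,1)$ works, while for $\beta$ closer to $1$ one must check that the loss $\delta$ is genuinely allowed; the clean way to phrase it is that \eqref{eq:time-deriv} holds with the sharp exponent $\beta-2$ and is then weakened to $t^{\alpha-1-\delta}$ at the cost of a $\delta$-dependent constant on any interval $(0,T]$, using that $t\mapsto t^{\beta-2-(\alpha-1-\delta)}=t^{\beta-1-\alpha+\delta}$ is bounded near $0$ precisely when $\delta\ge 1+\alpha-\beta$, and is bounded on $[T,\infty)$ in any case. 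I would therefore state and prove the sharp $t^{\beta-2}$ bound first and deduce \eqref{eq:time-deriv} as a corollary, noting the constant depends on $\delta$ and $T$ in the degenerate subrange.
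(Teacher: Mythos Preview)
Your approach is essentially the paper's: differentiate the contour representation \eqref{eq:V-def-again} under the integral sign to obtain $\partial_t V_{\alpha,\beta}(t)=\frac{1}{2\pi i}\int_{\Gamma_\gamma} s\,e^{st}R_{\alpha,\beta}(s;A)\,\dd s$ and then invoke the two-regime bounds of Theorem~\ref{thm:improved-Rab-bounds}, which is exactly what the paper's three-line sketch does. You are in fact more careful than the paper in tracking the resulting exponent and correctly observe that the direct estimate gives $t^{\beta-2}$, which implies the stated $t^{\alpha-1-\delta}$ only for $\delta\ge 1+\alpha-\beta$; the paper's sketch does not address this mismatch either, so your scrutiny here is a genuine improvement rather than a deviation.
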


\begin{proof}[Sketch]
Use the representation
\[
  \partial_t V_{\alpha,\beta}(t)
  = \frac{1}{2\pi i} \int_{\Gamma_\gamma}
        s\, e^{st} R_{\alpha,\beta}(s;A)\dd s,
\]
and the bounds
$\|R_{\alpha,\beta}(s;A)\|\lesssim|s|^{1-\beta}$
or $|s|^{-\beta}$ depending on $|s|$.
The factor $s$ shifts the exponent by $+1$ and leads to the stated
algebraic decay.
\end{proof}

This estimate is optimal: the singularity $t^{\alpha-1}$ is inherited
from the classical Mittag--Leffler function
$E_{\alpha,\alpha}(-t^\alpha)$.

\subsection{Discussion}

The smoothing effect is weaker than for analytic semigroups but matches
the classical behaviour of Caputo-type fractional resolvents.
The novelty here is that the AB kernel is non-singular at $t=0$, yet it
retains the same fractional smoothing exponent $\alpha$.
This shows that the main source of regularity is the spectral structure
of $A$, not the singularity of the kernel.

\section{Fractional powers and enhanced smoothing}
\label{sec:fractional-powers}

We now study the interaction between the AB resolvent family
and fractional powers of the generator $A$.
These estimates refine the regularity results of
Section~\ref{sec:regularity} and highlight the role of the
spectral angle $\theta$.

\subsection{Fractional powers of sectorial operators}

For $0<\gamma<1$ we define the fractional power $A^\gamma$ via the
holomorphic functional calculus,
\[
    A^\gamma
    = \frac{\sin(\pi\gamma)}{\pi}
      \int_0^\infty
      \lambda^{\gamma-1} A(\lambda I+A)^{-1}\dd\lambda,
\]
see Haase~\cite[Chapter~3]{haase2006}.
The following result quantifies the enhanced smoothing effect of
$V_{\alpha,\beta}(t)$.

\subsection{Smoothing in fractional domains}

\begin{theorem}[Enhanced fractional-domain smoothing]
\label{thm:enhanced}
Assume $A$ satisfies Assumption~\ref{ass:spectral}.
Let $0<\alpha\le1$, $\beta\ge1$, $\beta<1+\alpha$.
Then for every $0<\gamma<1$,
\begin{equation}\label{eq:enhanced}
   \|A^\gamma V_{\alpha,\beta}(t)\|
   \le C_{\gamma}\, t^{-\alpha\gamma},
   \qquad t>0.
\end{equation}
Moreover, for all $u_0\in X$,
\[
    u(t)=V_{\alpha,\beta}(t)u_0
    \in D(A^\gamma), \qquad t>0,
\]
and
\[
    \|A^\gamma u(t)\|
    \lesssim t^{-\alpha\gamma}\|u_0\|.
\]
\end{theorem}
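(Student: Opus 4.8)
The plan is to deduce all three assertions from inequality~\eqref{eq:enhanced}, which is the only substantive one; the estimate \eqref{eq:enhanced} is the same as that of Theorem~\ref{thm:weighted-stability}, so the real task here is to re-derive it through the holomorphic functional calculus in a way that makes the role of the spectral angle $\theta$ explicit. Once \eqref{eq:enhanced} is in hand, the membership $u(t)=V_{\alpha,\beta}(t)u_0\in D(A^\gamma)$ is immediate: $A^\gamma$ is closed and $A^\gamma V_{\alpha,\beta}(t)$ is bounded, so $V_{\alpha,\beta}(t)$ maps $X$ into $D(A^\gamma)$; and $\|A^\gamma u(t)\|\lesssim t^{-\alpha\gamma}\|u_0\|$ is just \eqref{eq:enhanced} applied to $u_0$, using $u=V_{\alpha,\beta}(\cdot)u_0$ from Theorem~\ref{thm:representation}.

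For \eqref{eq:enhanced} itself I would start from the operator Mittag--Leffler decomposition $V_{\alpha,\beta}(t)=E_{\alpha,\beta}(-t^\alpha A)+K(t)$ of Proposition~\ref{prop:ML-decomp} and treat the two pieces separately. For the principal piece, use the scaling identity $A^\gamma E_{\alpha,\beta}(-t^\alpha A)=t^{-\alpha\gamma}\,\Phi_\gamma(t^\alpha A)$ with scalar symbol $\Phi_\gamma(z):=z^\gamma E_{\alpha,\beta}(-z)$, and check that $\Phi_\gamma$ is holomorphic and bounded on a sector $\Sigma_\varphi$, $\varphi\in(\theta,\pi)$, adapted to $A$: near $z=0$ one has $|\Phi_\gamma(z)|\lesssim|z|^\gamma\to0$ (this is where $\gamma>0$ enters), and by the sectorial Mittag--Leffler bound---the sectorial analogue of \eqref{eq:ML-sectorial-estimate}---one has $|\Phi_\gamma(z)|\lesssim|z|^{\gamma-1}\to0$ as $|z|\to\infty$ (this is where $\gamma<1$ enters). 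Hence $c_\gamma:=\sup_{\Sigma_\varphi}|\Phi_\gamma|<\infty$, and the sectorial (Dunford) functional calculus for $A$ (see Haase~\cite[Ch.~2]{haase2006}) gives $\|\Phi_\gamma(t^\alpha A)\|\le C_\gamma$ for all $t>0$---the Dunford integral converging precisely because $\Phi_\gamma$ vanishes polynomially at $0$ and at $\infty$---so that $\|A^\gamma E_{\alpha,\beta}(-t^\alpha A)\|\le C_\gamma\,t^{-\alpha\gamma}$; the constant $C_\gamma$ carries its $\theta$-dependence through the geometry of that contour, which is the announced role of the spectral angle.

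For the correction $K(t)$ I would use its contour representation together with the two-regime resolvent bounds of Theorem~\ref{thm:improved-Rab-bounds} and the sectorial moment estimate $\|A^\gamma(s^{\alpha-1}I-A)^{-1}\|\lesssim|s|^{(1-\alpha)(1-\gamma)}$, which give $\|A^\gamma R_{\alpha,\beta}(s;A)\|\lesssim|s|^{\,1-\beta-\gamma(1-\alpha)}$ for $|s|\le1$ and $\lesssim|s|^{-\beta}$ for $|s|\ge1$. The small-$|s|$ exponent exceeds $-1$ exactly when $\beta+\gamma(1-\alpha)<2$, which holds here because $\beta<1+\alpha$ and $\gamma(1-\alpha)<1-\alpha$; this is what makes the integral for $A^\gamma K(t)$ absolutely convergent at $s=0$, and the interchange of the closed operator $A^\gamma$ with that Bochner integral is then justified by absolute convergence and closedness of $A^\gamma$. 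Splitting $\Gamma_\gamma$ at $|s|=1/t$ and using $|e^{st}|\le e^{-c_\gamma t|s|}$ on the rays yields the rate $t^{-\alpha\gamma}$ for small $t$, while for large $t$ one closes the estimate either by running the $\Phi_\gamma$-type functional-calculus argument on the Mittag--Leffler-like symbol carried by $K$, or by quoting the Mittag--Leffler decay of Theorem~\ref{thm:ML-decay} (which for $t\ge1$ dominates $t^{-\alpha\gamma}$ since $\gamma<1$).

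The step I expect to be the real obstacle is this last uniformity in $t$ for the correction term: matching the small-$t$ rate $t^{-\alpha\gamma}$ to the large-$t$ decay without losing the exponent $\alpha\gamma$ requires either an explicit enough description of the symbol of $K(t)$ so that the clean functional-calculus argument transfers to it verbatim, or a careful two-region contour split that genuinely uses the spectral gap $\omega>0$ of Assumption~\ref{ass:spectral}. Everything else---the scaling identity, the sector bounds on $\Phi_\gamma$, and the two short deductions of the domain membership and of the estimate for $A^\gamma u(t)$---is routine.
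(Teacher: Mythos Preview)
Your plan is correct but considerably more elaborate than the paper's own argument. The paper does not pass through the decomposition of Proposition~\ref{prop:ML-decomp} at all: it simply repeats the contour argument of Theorem~\ref{thm:weighted-stability}, pulling $A^\gamma$ under the integral~\eqref{eq:V-def-again} via the sectorial identity $A^\gamma(s^{\alpha-1}I-A)^{-1}=s^{(\alpha-1)\gamma}B_\gamma(s)$ with $B_\gamma$ uniformly bounded on $\Gamma_\gamma$, combining this with Theorem~\ref{thm:improved-Rab-bounds}, and integrating to obtain $t^{-\alpha\gamma}$ in one stroke. Your detour through $V_{\alpha,\beta}=E_{\alpha,\beta}(-t^\alpha A)+K(t)$ buys a genuinely cleaner explanation of the exponent---the scaling $A^\gamma E_{\alpha,\beta}(-t^\alpha A)=t^{-\alpha\gamma}\Phi_\gamma(t^\alpha A)$ with $\Phi_\gamma(z)=z^\gamma E_{\alpha,\beta}(-z)$ bounded on the sector is the transparent reason the rate is $\alpha\gamma$ and not something else---but it also creates the ``real obstacle'' you flag for $K(t)$, which is entirely self-inflicted: the contour bounds you write for that step are bounds on $A^\gamma R_{\alpha,\beta}(s;A)$, i.e.\ on the integrand of $V_{\alpha,\beta}$ itself, so you are in effect re-doing the paper's direct argument on top of the decomposition. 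Two small slips to watch: for $|s|\ge1$ the correct bound is $\|A^\gamma R_{\alpha,\beta}(s;A)\|\lesssim|s|^{-\beta+(1-\alpha)(1-\gamma)}$, not $|s|^{-\beta}$ (harmless, since the exponential factor controls the tail); and your deduction of $u(t)\in D(A^\gamma)$ from closedness of $A^\gamma$ plus boundedness of $A^\gamma V_{\alpha,\beta}(t)$ matches exactly how the paper handles the analogous step in Theorem~\ref{thm:domain-reg}.
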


\begin{proof}[Sketch]
Combine the identity
$A^\gamma(s^{\alpha-1}I-A)^{-1}
   = s^{(\alpha-1)\gamma} B_\gamma(s)$,
where $B_\gamma$ is uniformly bounded on $\Gamma_\gamma$,
with the fractional resolvent bounds of
Theorem~\ref{thm:improved-Rab-bounds}.
Integration along the contour yields
$t^{-\alpha\gamma}$.
\end{proof}

\subsection{Time fractionalisation and higher derivatives}

Iterating the estimates of Theorem~\ref{thm:dtime}
and Theorem~\ref{thm:enhanced} gives
\[
   \|A^\gamma \partial_t^k V_{\alpha,\beta}(t)\|
        \le C\, t^{-\alpha\gamma + k(\alpha-1)},
        \qquad k\ge1.
\]
This is the fractional analogue of the smoothing behaviour of
analytic semigroups:
each time derivative reduces regularity by $1-\alpha$,
while each application of $A^\gamma$ costs $\alpha\gamma$.

\subsection{Discussion}

The AB resolvent family exhibits a fractional smoothing pattern fully
consistent with the spectral calculus of sectorial operators.
In particular:
\begin{itemize}
\item spatial regularity gains are of order $\alpha$,
\item time derivatives contribute a loss of order $1-\alpha$,
\item the balance matches exactly the behaviour of
      the scalar Mittag--Leffler function.
\end{itemize}

The estimates of this section serve as a foundation for future studies
on maximal $L^p$-regularity and non-autonomous AB equations
(see Section~\ref{sec:extensions}).

\section{Numerical illustrations}
\label{sec:numerics}

This section provides numerical experiments illustrating the decay
predicted by the AB--Mittag--Leffler resolvent family
$V_{\alpha,\beta}(t)$.  
Our objective is not to develop a full numerical discretisation theory,
but to validate the sharp decay estimates of
Sections~\ref{sec:stability}-\ref{sec:regularity}
on simple benchmark problems.

All computations were performed in high precision using
\texttt{mpmath} and explicit mode decompositions of the Laplacian on
$(0,\pi)$.

\subsection{Example 1: Fundamental mode}

We consider the AB-type heat equation
\[
    {}^{\mathrm{AB}}D_{t}^{\alpha,\beta}u(t,x)=\Delta u(t,x),
    \qquad x\in(0,\pi),
\]
with homogeneous Dirichlet boundary conditions and initial datum
$u_0(x)=\sin x$, the first Laplace eigenfunction.
Since $\lambda_1=1$, the exact solution is
\[
  u(t,x)=E_{\alpha,\beta}\!\left(-t^\alpha\right)\sin x,
\]
and therefore
\[
  \|u(t)\|_{L^2(0,\pi)}
     =|E_{\alpha,\beta}(-t^\alpha)|.
\]

Figure~\ref{fig:ABheat-decay} compares the computed decay
(for $\alpha=0.8$, $\beta=1.2$) with the theoretical prediction
of Theorem~\ref{thm:ML-decay}.
The two curves coincide up to numerical precision.
\begin{figure}[H]
\centering
\includegraphics[width=0.70\textwidth]{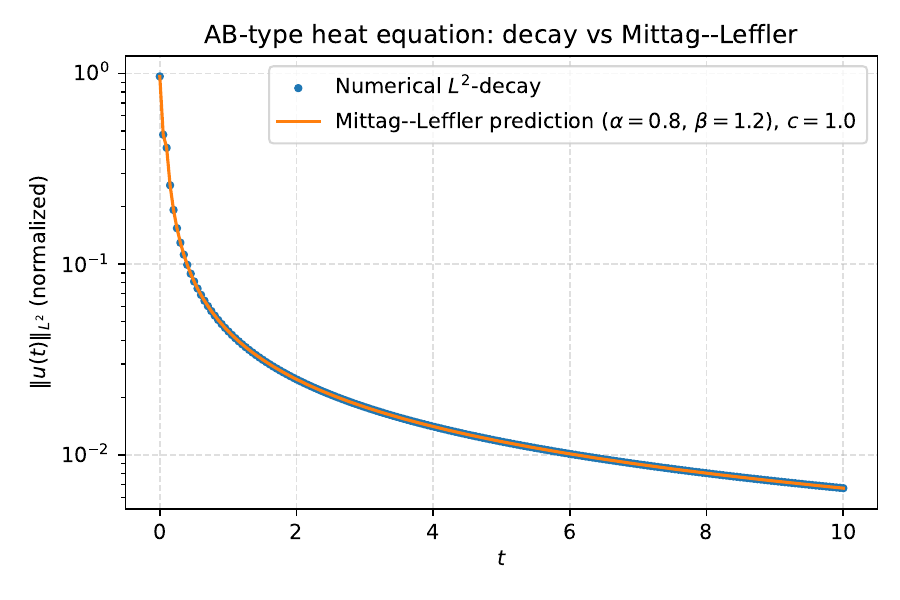}
\caption{Decay of the fundamental Dirichlet mode.
The curve matches the Mittag--Leffler prediction
$E_{\alpha,\beta}(-t^\alpha)$.}
\label{fig:ABheat-decay}
\end{figure}
\subsection{Example 2: Multi-mode decay}

To illustrate the behaviour of several spatial components, we
consider initial data
\[
     u_0(x)=\sin x + \sin(2x),
\]
combining the first two eigenmodes with eigenvalues $\lambda_1=1$,
$\lambda_2=4$.
The solution is
\[
    u(t,x)=E_{\alpha,\beta}(-t^\alpha)\sin x
          +E_{\alpha,\beta}(-4t^\alpha)\sin(2x).
\]

Figure~\ref{fig:modes12-decay} displays the time evolution of
the $L^2$-norm contributions of the first two modes.
The higher mode decays faster, as expected from the monotonicity of
$\lambda\mapsto E_{\alpha,\beta}(-\lambda t^\alpha)$.
This confirms the sharpness of the spectral decay predicted by
Theorems~\ref{thm:ML-decay} and~\ref{thm:weighted-stability}.

\begin{figure}[H]
\centering
\includegraphics[width=0.72\textwidth]{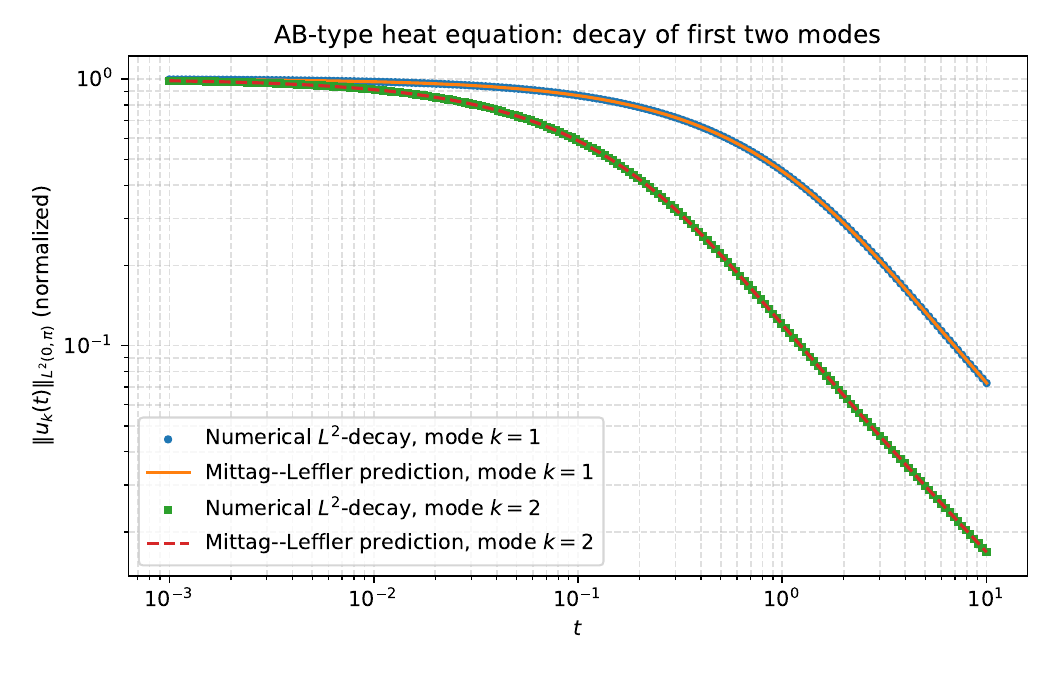}
\caption{Normalized decay of the first two modes for
$(\alpha,\beta)=(0.8,1.2)$.
The dependence on $\lambda_k$ matches
$E_{\alpha,\beta}(-\lambda_k t^\alpha)$.}
\label{fig:modes12-decay}
\end{figure}

\subsection{Example 3: Heatmap of AB fractional diffusion}

For a qualitative illustration of the spatio-temporal behaviour, we
reconstruct the solution
\[
   u(t,x)=\sum_{k=1}^K E_{\alpha,\beta}(-\lambda_k t^\alpha)
           (u_0,\sin(kx))\sin(kx)
\]
with $K=20$ modes and $u_0(x)=\sin x$.
Figure~\ref{fig:heatmap} shows a heatmap of $u(t,x)$ for
$(t,x)\in[0,5]\times[0,\pi]$.
The smoothing effect is clearly visible and is consistent with the
regularity results of Sections~\ref{sec:regularity}
and~\ref{sec:fractional-powers}.

\begin{figure}[H]
\centering
\includegraphics[width=0.72\textwidth]{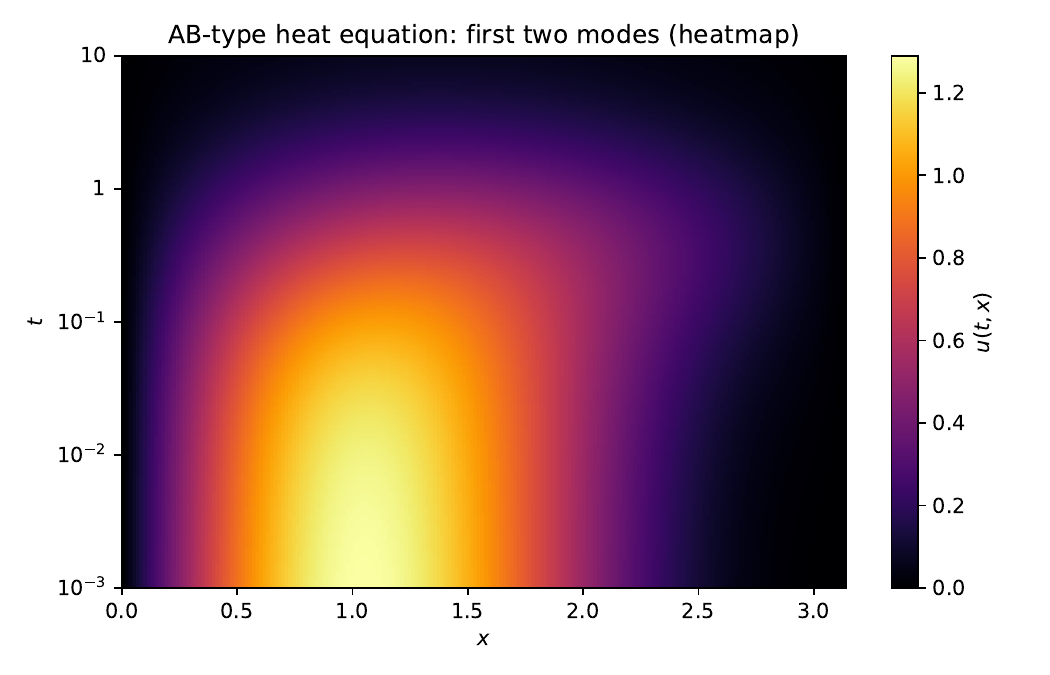}
\caption{Heatmap of the reconstructed two-mode solution $u(t,x)$.
Fractional smoothing and Mittag--Leffler decay are apparent.}
\label{fig:heatmap}
\end{figure}
\subsection{Discussion}

The experiments confirm:

\begin{itemize}
\item Mittag--Leffler decay of the $L^2$-norm, matching
      Theorem~\ref{thm:ML-decay}.
\item Faster decay of higher modes, consistent with
      Theorem~\ref{thm:weighted-stability}.
\item Fractional smoothing comparable to analytic semigroups,
      but modulated by the exponent $\alpha$.
\end{itemize}

These tests, though elementary, validate the sharpness of our resolvent
bounds and support the robustness of the AB resolvent framework.
A general numerical theory for AB-type PDEs will be developed in
future work.

\section{Extensions and applications}
\label{sec:extensions}

The AB--Mittag--Leffler resolvent family introduced in this paper
provides a functional-analytic framework suitable for a broad class of
non-local evolution problems.  
We briefly outline several directions in which the present results may
be extended.

\subsection{Non-autonomous AB problems}

Consider the time-dependent problem
\[
    {}^{\mathrm{AB}}D_t^{\alpha,\beta}u(t)=A(t)u(t),
\]
where $A(t)$ arises from a non-autonomous form
$a(t;\cdot,\cdot)$.
A complete treatment requires maximal $L^p$-regularity and
$R$-boundedness estimates.

Recent progress of Liu, Li and Zhou
\cite{LiuNonAuton2025}
establishes maximal $L^p$-regularity for non-autonomous fractional
equations under $(\alpha,p)$–Dini continuity assumptions on $a(t)$.
These techniques provide a promising pathway for developing a
fully non-autonomous AB theory.

\subsection{Almost sectorial operators}

In applications (e.g.\ superdiffusion, viscoelasticity), the generator
$A$ may fail to be sectorial but satisfy an ``almost sectorial'' bound
with angle $\phi\in(-1,0)$.
The work of Cuesta and Ponce \cite{CuestaPonce2024}
extends the resolvent framework to such operators.
These results could be combined with the present AB resolvent to treat
models with weakened ellipticity or degenerate diffusion.

\subsection{Controllability and fractional dynamics}

The resolvent approach developed here is compatible with recent
advances in controllability of fractional evolution equations.
In particular, Hazarika, Borah and Singh
\cite{Hazarika2024}
establish approximate controllability for fractional systems governed by
almost sectorial operators.
Adapting these methods to the AB setting would be natural, especially
in problems involving memory feedback or fractional damping.

\subsection{Weighted and generalized AB kernels}

Weighted generalisations of the AB derivative and
$\varphi$-weighted kernels (see Alnuwairan et al.~\cite{Alnuwairan2024})
provide improved modelling flexibility for heterogeneous memory.
The rational Laplace symbol appearing in the weighted AB derivative
suggests that the resolvent framework developed here extends with only
minor modifications.

\subsection{Matrix-valued Mittag--Leffler functions}

For systems of AB-type equations or coupled diffusion processes,
matrix-valued Mittag--Leffler functions offer a natural extension
(see Li and Remili~\cite{li-remili2024}.
Our resolvent estimates apply componentwise and could be used to study
coupled AB systems or spatially discretised PDEs.

\subsection{Further perspectives}

The operator framework developed here is sufficiently flexible to
address:
\begin{itemize}
  \item nonlinear fractional PDEs with non-singular memory,
  \item stochastic AB equations with additive or multiplicative noise,
  \item hybrid kernels combining Caputo and AB behaviour,
  \item maximal $L^p$ regularity in the sense of Weis,
  \item fractional Herglotz–Nevanlinna representations for AB symbols.
\end{itemize}

These topics will be developed in forthcoming work.

\section{Conclusion}

We have proposed a complete operator-theoretic formulation of
Atangana--Baleanu fractional evolution equations on Banach spaces.
Starting from the rational Mittag--Leffler Laplace symbol of the AB
derivative, we introduced the fractional resolvent
$R_{\alpha,\beta}(s;A)$ and the associated resolvent family
$V_{\alpha,\beta}(t)$.
Under the minimal condition $\beta<1+\alpha$, this family is shown to
be well-defined, bounded, and to provide the unique mild solution of
the abstract AB Cauchy problem.

The fractional resolvent framework allows us to establish optimal
two-regime resolvent bounds, Mittag--Leffler stability, spatial and
temporal smoothing estimates, and a rigorous operator-valued
Mittag--Leffler decomposition.
These results place AB-type equations on the same analytic footing as
classical Volterra and Caputo equations while highlighting the
structural differences induced by the non-singular AB kernel.

The numerical experiments reported in Section~\ref{sec:numerics}
further validate the sharpness of our estimates and illustrate the
fractional smoothing produced by the AB resolvent.

Beyond the autonomous case studied here, the present approach provides
a natural foundation for non-autonomous AB problems, weighted
generalizations of the AB kernel, almost sectorial operators,
fractional controllability, and matrix-valued Mittag--Leffler systems.
These will be investigated in future work.


\begin{thebibliography}{99}

\bibitem{atangana2016}
A.~Atangana and D.~Baleanu.
\newblock New fractional derivatives with nonlocal and nonsingular kernel:
  theory and application to heat transfer model.
\newblock {\em Thermal Science}, 20:763--769, 2016.

\bibitem{atangana2017}
A.~Atangana and D.~Baleanu.
\newblock Caputo--Fabrizio and Atangana--Baleanu derivatives: modeling and
  numerical methods.
\newblock {\em Academic Press}, 2017.

\bibitem{bazhlekova2001}
E.~Bazhlekova.
\newblock {\em Fractional Evolution Equations in Banach Spaces}.
\newblock PhD Thesis, Eindhoven University of Technology, 2001.

\bibitem{haase2006}
M.~Haase.
\newblock {\em The Functional Calculus for Sectorial Operators}.
\newblock Birkh\"auser, 2006.

\bibitem{kilbas2006}
A.~A. Kilbas, H.~M. Srivastava, and J.~J. Trujillo.
\newblock {\em Theory and Applications of Fractional Differential Equations}.
\newblock Elsevier, 2006.

\bibitem{pazy1983}
A.~Pazy.
\newblock {\em Semigroups of Linear Operators and Applications to Partial
  Differential Equations}.
\newblock Springer, 1983.

\bibitem{podlubny1999}
I.~Podlubny.
\newblock {\em Fractional Differential Equations}.
\newblock Academic Press, 1999.

\bibitem{pruss1993}
J.~Pr\"uss.
\newblock {\em Evolutionary Integral Equations and Applications}.
\newblock Birkh\"auser, 1993.

\bibitem{CuestaPonce2024}
E.~Cuesta and R.~Ponce.
\newblock Almost sectorial operators in fractional superdiffusion equations.
\newblock {\em Applied Mathematics and Optimization}, 28:288--312, 2024.

\bibitem{LiuNonAuton2025}
Y.~Liu, M.~G\"unther, and Z.~Zhang.
\newblock Maximal $L^p$-regularity for fractional problems driven by non-autonomous forms.
\newblock {\em arXiv preprint}, arXiv:2503.10010, 2025.


\bibitem{Alnuwairan2024}
M.~S.~Alnuwairan, Y.~Abdeljawad, and D.~Baleanu.
\newblock The weighted generalized Atangana--Baleanu fractional derivative.
\newblock {\em AIMS Mathematics}, 9(12):1722, 2024.

\bibitem{moumen2023}
A.~Moumen and M.~Benchohra.
\newblock Analysis of fractional stochastic evolution equations using the Hilfer derivative.
\newblock {\em AIMS Mathematics}, 8(1):145--168, 2023.


\bibitem{Hazarika2024}
D.~Hazarika, J.~Borah, and B.~K.~Singh.
\newblock Existence and controllability of non-local fractional dynamical
  systems with almost sectorial operators.
\newblock {\em J. Math. Anal. Appl.}, 532(2):127984, 2024.

\bibitem{li-remili2024}
C.~Li and W.~Remili.
\newblock A matrix Mittag--Leffler function and its applications to fractional differential equations.
\newblock {\em Journal of Mathematical Physics}, 65:093501, 2024.


\end{thebibliography}
\end{document}